\documentclass[a4paper,12pt]{article}

\usepackage{amsthm,amsmath,amssymb} 

 \usepackage{mathtools}
\usepackage{url} 
\usepackage{cases} 
\usepackage{tikz}
\usepackage{tikz-cd}
\usepackage{hyperref}

\newtheorem{theorem}{Theorem}[section]

\newtheorem{corollary}[theorem]{Corollary}

\theoremstyle{definition} 
\newtheorem{definition}{Definition}[section] 
\newtheorem{remark}{Remark}[section] 
\newtheorem{example}{Example}[section]

\newcommand{\F}{\mathbb{F}}
\newcommand{\K}{\mathrm{K}}
\newcommand{\GL}{\mathrm{GL}}
\newcommand{\M}{\mathrm{M}}

\newcommand{\PG}{\text{PG}}
\newcommand{\SL}{\text{SL}}

\newcommand{\hull}{\text{Hull}}
\newcommand{\pf}{\text{Pf}}

\newcommand{\diag}{\text{diag}}

\newcommand{\Rad}{\text{Rad}}
\newcommand{\rad}{\text{Rad}}
\newcommand{\G}{\mathcal{G}}
\newcommand{\rank}{\operatorname{rank}}

\title{Hull Variation in  Symplectic Linear Codes}
\author{Kanat Abdukhalikov \\	
Department of Mathematical Sciences, \\
UAE University, PO Box 15551, Al Ain, UAE\\
Email: abdukhalik@uaeu.ac.ae \medskip  \\ 
Somi Gupta \\
Department of Mathematical Sciences, \\
UAE University, PO Box 15551, Al Ain, UAE\\
Email: somi.gupta@uaeu.ac.ae \medskip \\
Duy Ho \\
Department of Mathematical Sciences, \\
UAE University, PO Box 15551, Al Ain, UAE\\
Email: duyho@uaeu.ac.ae
}
\date{ }

\begin{document} 

\maketitle
\begin{abstract} 
We study the variation of hull dimensions of symplectic linear codes under equivalence transformations preserving the symplectic distance. For a symplectic linear code $C$, we introduce an even-valued invariant $\Delta(C)$, which depends only on the equivalence class of $C$. This invariant can be expressed as the generic rank of an associated space of alternating matrices and, equivalently, as twice a corresponding matroid matching number. If $C$ has dimension $k$, we prove that the minimum symplectic hull dimension among all codes equivalent to $C$ is $k-\Delta(C)$. 
Consequently, $C$ is equivalent to a symplectic LCD code if and only if $\Delta(C)=k$. We further show that every symplectic MDS code of even dimension is monomially equivalent to a symplectic LCD code.

For a symplectic self-orthogonal code $C$, we determine all attainable hull dimensions: for every integer $r$ with $0\le r\le \Delta(C)/2$, there exists a monomially equivalent code whose symplectic hull has dimension $k-2r$. We also refine a standard form for the generator matrix of a symplectic self-orthogonal code and derive the corresponding parity-check matrix and the Gram matrix of the dual code. In the self-dual case, we characterize $\Delta(C)$ through minors of an associated symmetric matrix. Furthermore, we relate hull variation to graph matchings, establish a bound for $\Delta(C)$, 
and give graph constructions producing symplectic self-dual MDS, optimal, and best-known codes.
\end{abstract} 
\noindent
\textit{Keywords:} Symplectic linear codes, symplectic self-orthogonal codes, symplectic self-dual codes, symplectic LCD codes, generic rank, matching number.

\noindent
\textit{Mathematics subject classification:} 94B05, 94B60, 68R05, 81P73.
\section{Introduction}

Let $C$ be a linear code over a finite field. 
The hull of $C$ is the intersection of $C$ with its dual: $\hull(C)=C\cap C^\perp$. 
This notion was introduced by Assmus and Key \cite{key} in connection with the classification of finite projective planes. 
It generalizes the notions of self-orthogonal codes and linear complementary dual (LCD) codes. 
The hull of a self-orthogonal code $C$ is $C$, and the hull of an LCD code is zero. Linear codes with various hull dimensions have attracted considerable attention lately since they can influence the complexity of some algorithms \cite{Sendrier97, Sendrier2000} and play an important role in quantum coding theory. 
Euclidean LCD codes were first studied by Massey \cite{Massey}, who established the existence of asymptotically good LCD codes. 
Carlet and Guilley \cite{Carlet} investigated applications of binary LCD codes to protection against side-channel attacks and fault-injection attacks, and presented several constructions of such codes. 
A major development in the theory was obtained by Carlet et al.  \cite{Mesnager}, who proved that every $q$-ary linear code is monomially equivalent to a Euclidean LCD code when $q > 3$, and every $q^2$-ary linear code is equivalent to a Hermitian LCD code when $q>2$.  
Luo et al.  \cite{Ling} further showed that a $q^2$-ary linear code with Hermitian hull of dimension $m$ can be transformed into an equivalent code with a smaller hull dimension $\ell< m$ when $q>2$.  
The analogous result holds for Euclidean hulls of $q$-ary codes when $q>3$. 
More general questions concerning the variation of hull dimensions were considered in \cite{Chen}.
The variation of hull dimensions for vector rank-metric codes has also been studied in \cite{Ho, ho2026}.

Much less is known about hull variation for symplectic linear codes, i.e., linear codes equipped with the symplectic inner product and symplectic distance. Such codes arise naturally in the study of quantum error-correcting codes and entanglement-assisted quantum error-correcting codes, see  \cite{AH26, AshikhminKnill01, ball2021, ball2025, BallMorenoSimoens25, BallPopatia26,   CRSS97, Calderbank, Ezerman21, galindo2019, KKKS06, Rains99}.  
Symplectic linear codes have recently been investigated in \cite{li2024, XuDu21, XuDu22, li2024b}. 

In this paper, we study symplectic linear codes under $\GL$-equivalence, which preserves symplectic distance \cite{GLPllaha19, Pllaha}.
We show that hull variation under this equivalence is governed by the generic rank of a matrix space, a notion introduced by L\'{a}szl\'{o} Lov\'asz in connection with graph matchings and the matroid parity problem \cite{Lovasz79, Lovasz80, Lovasz89}.
To each symplectic linear code $C$, we associate an invariant $\Delta(C)$ of the equivalence class of $C$.
In Section \ref{HVPlinea}, we show that the invariant $\Delta(C)$ is always an even integer and is equal to the generic rank of an associated matrix space.
 The invariant $\Delta(C)$ can also be interpreted as twice an associated matroid matching number.

 We further show that if $C$ is a symplectic linear code of length $n$ and dimension $k$, then any code $C'$ equivalent to $C$ has symplectic hull dimension greater than or equal to $k-\Delta(C)$, and there is a code $C''$ equivalent to $C$ with symplectic hull dimension exactly $k-\Delta(C)$. In particular, the code $C$ is equivalent to an LCD code if and only if $\Delta(C)=k$. 
We also prove that every symplectic maximum distance separable (MDS) code of even dimension is monomially equivalent to an LCD code.

For symplectic self-orthogonal codes, we obtain a complete hull variation result. 
 If $C$ is symplectic self-orthogonal, then,
for every integer $r$ satisfying $0 \le r \le \Delta(C)/2$, there exists a code $C'$ monomially equivalent to $C$ with symplectic hull dimension $k-2r$. 
Consequently, every admissible hull dimension between $k$ and $k-\Delta(C)$, with the same parity as $k$, occurs within the monomial-equivalence class of $C$; see Section \ref{HVPselforthogonal}.

We also establish several structural results for symplectic self-orthogonal codes. In Section~\ref{standard}, we refine the standard form of a generator matrix for symplectic self-orthogonal codes given in \cite{nielsenbook}. Using this form, we determine a parity-check matrix for $C$ and the Gram matrix of the dual code $C^{\perp_s}$.

If $C$ is a symplectic self-dual  code, then its generator matrix can be written in the form $G=[I \,|\, A]$, where $I$ is the identity matrix and $A$ is symmetric \cite{Danielsen2006, Danielsen2009,Danielsen2010,glynn,Tonchev}. In this setting, we prove that $\Delta(C)/2$ is the largest integer $r$ for which $A$ has a nonzero $r\times r$ minor involving no diagonal entries; see Section \ref{HVPselfdual}.

Finally, in Section \ref{graph}, we investigate connections among symplectic self-dual codes and the matching number $\nu(\G)$ of an associated graph $\G$. 
If $\G_C$ is the graph associated with a code $C$, then $\Delta(C)\leq 2\nu(\G_C)$. 
We further study connections of bipartite graphs and trees to the hull variation.
We provide graph constructions yielding symplectic self-dual MDS codes, optimal codes, and codes attaining the best-known parameters listed in \cite{grassl}. 
We also present examples of graphs with the property $\Delta(C)= 2\nu(\G_C)$.

\section{Preliminaries}\label{pre}

Let $q=p^r$ be a prime power and $\F_q$  be a finite field with $q$ elements. We define a symplectic inner product on $ \F_q^{2n}$. Let $u = (x_1, \ldots, x_n \mid y_1, \ldots, y_n)$ and 
$v = (x'_1, \ldots, x'_n \mid y'_1, \ldots, y'_n)$ be two vectors in 
$\F_q^{2n}$. Throughout the paper, we use a vertical bar to separate the first 
$n$ entries from the last $n$ entries. 
\noindent The \textit{symplectic inner product} is defined by
\[
\langle u, v \rangle_s 
= \sum_{i=1}^n 
\begin{vmatrix}
x_i & y_i \\ 
x'_i & y'_i 
\end{vmatrix}
= \sum_{i=1}^n (x_i y'_i - x'_i y_i).
\]
Equivalently, one can define the product $\langle \cdot , \cdot \rangle_s$ in matrix form as
\[
\langle u, v \rangle_s 
= u \, J_n v^{T},
\quad \text{where } 
J_n = 
\begin{bmatrix}
0 & I_n \\[3pt]
- I_n & 0
\end{bmatrix} \text{ and } I_n \text{ is the }n\times n \text{ identity matrix}.\]
It can be easily seen that every vector in $\F_q^{2n}$ is orthogonal to itself, i.e., $\langle u, u \rangle_s =0$. The \emph{symplectic distance} between $u$ and $v$ is defined as
\[
d_s(u,v) := w_s(u - v),
\]
where the \emph{symplectic weight} $w_s(u)$ is given by
\[
w_s(u) := 
\# \bigl\{\, i \in \{1,\dots,n\} : (x_i, y_i) \neq (0,0) \,\bigr\}.
\]
Equivalently,
\[
d_s(u,v) 
= \# \bigl\{\, i \in \{1,\dots,n\} : (x_i, y_i) \neq (x'_i, y'_i) \,\bigr\}.
\]
A symplectic linear code $C$ is a $k$-dimensional subspace of $\F_q^{2n}$. We define the distance $d$ of a code $C$ to be the minimum among the non-zero symplectic weights of its codewords. We denote $C$ as a $[2n,k,d]_q^s$ code. The dual code $C^{\perp_s}$ of $C$ is 
\[
C^{\perp_s}=\{u\in \F_q^{2n} : \langle c,u\rangle_s=0 \text{ for all } c\in C\}.
\]
A code $C$ can be represented by a generator matrix $G \in \F_q^{k\times2n}$ or a parity-check matrix $H\in\F_q^{(2n-k)\times2n}$, for which the code is the image and the kernel, respectively.
We define the symplectic hull of a code $C$ by $\hull(C)=C\cap C^{\perp_s}$. The \textit{Gram matrix} of the code $C$ is given by 
\[\Omega=GJ_nG^T. \] It is well-known that $\dim(\hull(C))=k-\rank(\Omega)$ (see \cite[Proposition 2]{galindo2019}). Moreover, $\Omega$ is an alternating matrix, hence $\rank(\Omega)$ is always even.

If $\hull(C)=C$, then the code $C$ is called \textit{symplectic self-orthogonal}. If  $\hull(C)=C=C^{\perp_s}$, then $C$ is called \textit{symplectic self-dual}. If $\hull(C)=0$, then $C$ is called a \textit{symplectic linear complementary dual (LCD) code}. Since we consider only the symplectic inner product, we omit the word ``symplectic'' when referring to these codes. 
 Let $C$ be a $[2n,k]_q^s$ code. The Singleton bound is given by \[d\leq n-\left\lceil \frac{k}{2} \right\rceil +1.\] 
 If the bound is attained, then $C$ is called a symplectic maximum distance separable (MDS) code.
   Let $u = (x_1, \ldots, x_n \mid y_1, \ldots, y_n)\in\F_q^{2n}$. We can regroup the $i$-th and $(i+n)$-th coordinates of $u$, for all $i\in\{1,2,\dots,n\}$, to get the form
$$u = (x_1, y_1 \,|x_2,y_2\,|\ldots \,|x_n, y_n).$$
\begin{definition}[$\GL$-equivalence]
Let $C_1$ and $C_2$ be two $[2n,k]_q^s$ codes. 
We say that $C_1$ and $C_2$ are \emph{$\GL$-equivalent} if there exist, a permutation $\pi\in S_n$ and matrices $A_1,A_2,\dots,A_n\in\GL_2(\F_q)$ such that, for all $u\in C_1$, the vector 
\[v= ((x_{\pi(1)},y_{\pi(1)}) A_{\pi(1)}|\,(x_{\pi(2)},y_{\pi(2)})A_{\pi(2)}|\,\dots|\,(x_{\pi(n)},y_{\pi(n)}) A_{\pi(n)})\]
belongs to $C_2$.
\end{definition}
We can express this definition in terms of generator matrices. 
Let $G_1$ and $G_2$ be  generator matrices of $C_1$ and 
$C_2$, respectively.
The code $C_1$ is $\GL$-equivalent to $C_2$ if and only if there exist a permutation matrix $P$ associated to some $\pi\in S_n$ and a block matrix $A=\diag(A_1,A_2,\dots,A_n)$, where $A_1,\dots,A_n\in\GL_2(\F_q)$, such that
\[G_2=G_1 A (P\otimes I_2).\]

\begin{definition}[$\text{SL}$-equivalence]
Let $C_1$ and $C_2$ be two  $[2n,k]_q^s$ codes with generator matrices $G_1$ and $G_2$, respectively. They are said to be $\text{SL}$-\emph{equivalent} if  there exist a permutation matrix $P$  associated to $\pi\in S_n$ and a block matrix $A=\diag(A_1,A_2,\dots,A_n)$ such that $G_2=G_1 A (P\otimes I_2)$, where $A_1,\dots,A_n\in\text{SL}_2(\F_q)$.
\end{definition}
It is easy to check that the $\text{SL}$-equivalences do not change the symplectic inner product of vectors, see for example \cite{AH26, Pllaha}. 
Furthermore, any matrix $A\in\GL_2(\F_q)$ can be factored as $A=D B$, where $D=\diag(\text{det}(A), 1)$ and $B\in\text{SL}_2(\F_q)$. Therefore, when we want to discuss equivalent codes that change symplectic hulls, we need to discuss codes $G_2=G_1 M$, 
where $M=\diag(d_1,1, d_2,1,\dots,d_n,1)$ and $d_i=\det(A_i)\in\F^*_q$, for all $i\in\{1,2,\dots,n\}$.
\begin{remark}\label{GLequimonomial}
    The $\GL$-equivalence preserves the symplectic distance/weight as for any $u,v\in\F_q^{2n}$:
    \[
d_s(uM,vM) =d_s(u,v).
\]  However,  $\GL$-equivalence does not preserve the symplectic inner product of codewords. Under the action of matrix $M$, the inner product of $u$ and $v$ transforms as
\[
\langle uM, vM \rangle_s 
= \sum_{i=1}^n d_i(x_i y'_i - x'_i y_i).
\]
\end{remark}

 A \textit{monomial matrix} is a square matrix having exactly one nonzero element in each row and column. Let $\mathcal{M}_{2n}(\F_q)$ be the group of $2n\times 2n$ monomial matrices with entries from $\F_q$. 
\begin{definition}[Monomial equivalence]
    Let $C_1$ and $C_2$ be two $[2n,k]_q^s$
codes with generator matrices $G_1$ and $G_2$, respectively.  Then, $C_1$ and $C_2$
are \textit{monomially equivalent} if there exists a matrix $M \in \mathcal{M}_{2n}(\F_q)$, such that
$
G_2 = G_1 M.
$ 
\end{definition} 

\begin{remark}
    Note that, by Remark \ref{GLequimonomial}, the study of    $\GL$-equivalences that change the symplectic hull is the same as the study of monomial equivalences. In further discussion, we can restrict ourselves to the monomial equivalences. 
\end{remark}
\begin{remark}
    Notice that to change the symplectic inner product, we need $d_i\in\F_q^*$ not all equal to 1, for $i\in\{1,2,\dots,n\}$. Therefore, if $q=2$, then it is not possible to change the symplectic inner product of codewords.
\end{remark}
\begin{remark}
    Note that if the generator matrix is written as
\[G = \big[\, a_1 \, a_2 \dots a_n\, | \, b_1 \, b_2\dots  b_n \,\big]\] then we have to rearrange the monomial matrix $M$ as $M=\diag(d_1,d_2,\dots,d_n,1, \dots, 1)$. They are just two ways of writing the generator matrix.  
\end{remark}

\section{Hull variation for symplectic linear codes}\label{HVPlinea}
From now on, we assume that $q\geq3$. In this section, we discuss codes monomially equivalent to the code $C$ and their symplectic hulls. 
 
Let $C$ be a linear $[2n,k]_q^s$ code with a generator matrix $G$ formed by the column vectors $a_i,b_i$ in $ \F_q^{k\times 1}$ for $i\in\{1,2,\dots,n\}$ as follows
\begin{equation}\label{generator}
    G = \big[\, a_1 \, b_1 \, | \, a_2 \, b_2 |\, \dots \,|\,a_n \, b_n \,\big].
\end{equation}
 Let 
\[M=\diag(d_1, 1,d_2, 1\dots,d_n,1),\] and  $D=\diag(d_1,d_2,\dots,d_n)$, where $d_1,\dots,d_n\in \F_q^*$. 
The code $C_D$ generated by the matrix $G_D=GM$ is a code monomially equivalent to $C$. We are interested in computing the symplectic hull dimension of $C_D$. Let the Gram matrix of $C_D$ be
\begin{equation*}\Omega_D=G_DJ'G^T_D=\sum_{i=1}^{n}d_i(a_ib_i^T-b
_ia_i^T),
\end{equation*}
where $J'=\diag(J_2, J_2,\dots,J_2)$ and $J_2=\begin{bmatrix}
    0 & 1\\-1 &0
\end{bmatrix}$.   Define \begin{equation*}
\Delta(C)=\max_{d_1,d_2,\dots,d_n\in \F_q^*}\{\rank(\Omega_D)\}.
\end{equation*} 
The number $\Delta(C)$ is an invariant of the equivalence class of $C$. Also, as $\Omega_D$ is an alternating matrix, $\Delta(C)$ is even. 
We are interested in finding $\Delta(C)$. We will follow the concept of generic rank introduced by L. Lovász \cite{Lovasz89} in 1989. Let $\mathcal{A}$ be a linear subspace of $m\times n$ matrices over $\F_q$. Then the generic rank of $\mathcal{A}$, denoted by $gr(\mathcal{A})$, is the maximum rank of matrices in it. 

 Let $L_C$ be the linear subspace of $\F_q^{k\times k}$ generated by $k\times k$ matrices $(a_ib_i^T-b
_ia_i^T)$, for $i\in\{1,2,\dots,n\}$. Here $a_i,b_i$ are as in Equation \eqref{generator}. In other words,
\begin{equation*}
    L_C=\langle (a_ib_i^T-b
_ia_i^T) : 1\leq i \leq n\rangle.
\end{equation*}
Then
\[gr(L_C)=\max_{d_1,d_2,\dots,d_n\in \F_q}\{\rank(\Omega_D)\}.\]
The matrices 
$(a_ib_i^T-b
_ia_i^T)$, for $1\leq i \leq n$,  have rank two unless $a_i$ and $b_i$ are proportional vectors. Let $S=\{a_{i_1},b_{i_1}, a_{i_2},b_{i_2}, \dots, a_{i_s},b_{i_s}\}$ be a largest family of pairs $a_i$ and $b_i$ that are collectively linearly independent. Using the arguments from \cite[p. 92]{Lovasz89}, it follows that the matrix space spanned by $(a_{i_j}b_{i_j}^T-b
_{i_j}a_{i_j}^T)$, for $1\leq j\leq s$, has generic rank $2s$. We will see that $gr(L_C)$ is also $2s$.

Let $\{e_1,e_2,\dots,e_k\}$ be the canonical basis of $\F_q^{k\times 1}$. Let $A$ be the matrix associated with the change of basis that takes the first ${2s}$ canonical basis elements to the set $S$. We can now see that $S=\{Ae_1,Ae_2,\dots,Ae_{2s}\}$. Consider a matrix $\Omega_{D_s}$ as
\begin{equation*}
\Omega_{D_s}=\sum_{j=1}^s \big(a_{i_j} b_{i_j}^T-b_{i_j}a_{i_j}^T\big)=A\Big(\sum_{j=1}^{s}\big(e_{2j-1} e_{2j}^T- e_{2j} e_{2j-1}^T\big)\Big)A^T,
\end{equation*}
so that
\[A^{-1}\Omega_{D_s}(A^T)^{-1}=\sum_{j=1}^{s}(e_{2j-1} e_{2j}^T- e_{2j} e_{2j-1}^T).\]
The $k\times k$ matrix on the right clearly has rank $2s$. Hence, $\rank(\Omega_{D_s})=2s$, giving us $gr(L_C)\geq2s$. 

Let us consider a matrix 
\[X(x_1,\dots,x_n)=\sum_{i=1}^{n}x_i(a_ib_i^T-b
_ia_i^T),\]
 where $x_i$ are indeterminates for $i\in\{1,2,\dots,n\}$. 
 By \cite{cheung, Lovasz89},  we have $\rank(X(x_1,\dots,x_n))=2s$ over $\F_q(x_1,x_2,\dots,x_n)$. 
 Let us consider \[X'(d_1,\dots,d_n)=\sum_{i=1}^{n}d_i(a_ib_i^T-b
_ia_i^T),\]  for any $d_i\in\F_q$. We claim that $\rank(X')\leq2s$. Suppose not, then there exists a $(2s+2)\times (2s+2)$ non-zero minor, say $D(d_1,\dots,d_n)$, for some  $d_i\in\F_q$. Now take the same minor in $X$, i.e., $D(x_1,\dots,x_n)$. This is not identically zero, as $D(d_1,\dots,d_n)$ is non-zero. Then, $\rank(X)>2s$, which is a contradiction. So $\rank(X')\leq 2s$, hence $gr(L_C)=2s$.

\begin{theorem}\label{delta=2s}
    Let $C$ be a linear  $[2n,k]_q^s$ code and $G\in\F_q^{k\times 2n}$ be the generator matrix as in \eqref{generator}. Then $\Delta(C)=gr(L_C)$.
\end{theorem}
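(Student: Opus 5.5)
The plan is to prove $\Delta(C)\le gr(L_C)$ and $\Delta(C)\ge gr(L_C)$ separately. The inequality $\Delta(C)\le gr(L_C)$ is immediate. $\Delta(C)$ is a maximum of $\rank(\Omega_D)$ over $d_i\in\F_q^*$, while $gr(L_C)$ is the same maximum over all $d_i\in\F_q$. Every $\Omega_D$ lies in $L_C$.

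For the reverse inequality, the discussion preceding the theorem gives $gr(L_C)=2s$ and $\rank X(x_1,\dots,x_n)=2s$ over $\F_q(x_1,\dots,x_n)$. So it suffices to find $d_1,\dots,d_n$, all nonzero, with $\rank X(d_1,\dots,d_n)\ge 2s$.

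\textbf{Choice of polynomial.} Since $X$ is alternating of rank $2s$, it has a principal $2s\times 2s$ submatrix $X_T$, indexed by some $T\subseteq\{1,\dots,k\}$ with $|T|=2s$, whose Pfaffian $P(x)=\pf(X_T)$ is a nonzero polynomial. I use the Pfaffian rather than an arbitrary minor because of degrees. A $2s\times 2s$ minor may have degree $2$ in some $x_i$, which is too large when $q=3$. The Pfaffian should instead be \emph{multilinear}, i.e.\ of degree at most $1$ in each $x_i$.

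\textbf{Key step (the main obstacle).} The claim to prove is: if $B$ is an alternating matrix of rank at most $2$, then $t\mapsto\pf(A+tB)$ has degree at most $1$ in $t$. I would prove it without dividing by factorials, so that it holds in every characteristic. Apply a congruence $A\mapsto QAQ^T$ with $Q$ invertible; this multiplies every Pfaffian by $\det Q$. Choose $Q$ so that $QBQ^T=e_1e_2^T-e_2e_1^T$. Now only the $(1,2)$ and $(2,1)$ entries depend on $t$. Expanding the Pfaffian along the first row, $t$ enters only through the $(1,2)$ term, which is linear in $t$ times a Pfaffian free of $t$. Applying this to each $x_i$, with $B$ equal to the restriction of $a_ib_i^T-b_ia_i^T$ to $T$ (still of rank at most $2$), shows that $P$ is multilinear.

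\textbf{Finding a nonzero point.} A nonzero multilinear polynomial has a nonvanishing point in $(\F_q^*)^n$ once $q\ge 3$. I would argue by induction on $n$. Write $P=g\,x_n+h$ with $g,h$ multilinear in $x_1,\dots,x_{n-1}$ and not both zero. Apply the induction hypothesis to whichever of $g,h$ is nonzero to choose $(d_1,\dots,d_{n-1})\in(\F_q^*)^{n-1}$ at which the pair $(g,h)$ is not $(0,0)$. Then $g\,x_n+h$ is a nonzero polynomial of degree at most $1$ in $x_n$, so it has at most one root. Since $|\F_q^*|=q-1\ge 2$, some $d_n\in\F_q^*$ gives a nonzero value. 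Alternatively, one can cite the Combinatorial Nullstellensatz with $S_i=\F_q^*$.

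\textbf{Conclusion.} With this $D$, $\Omega_D=X(d_1,\dots,d_n)$ has a principal $2s\times 2s$ submatrix with nonzero Pfaffian. Hence $\rank\Omega_D\ge 2s$, so $\Delta(C)\ge 2s=gr(L_C)$, and the theorem follows.
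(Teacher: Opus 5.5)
Your proof is correct and follows the paper's route. Both arguments get $\Delta(C)\le gr(L_C)$ by restriction, and get the reverse inequality from a $2s\times 2s$ principal submatrix of $X$ whose Pfaffian is a nonzero multilinear polynomial, which for $q\ge 3$ is nonzero at some point of $(\F_q^*)^n$. The differences are minor: the paper picks the submatrix explicitly from $2s$ rows on which $G_S'$ is invertible (so that $Y(1,\dots,1,0,\dots,0)=G_S'J_sG_S'^T$ witnesses $\pf(Y)\neq 0$), whereas you take it from the function-field rank, and you also prove the multilinearity and the nonvanishing step that the paper only cites from Lov\'asz.
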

    \begin{proof}
    As $gr(L_C)=2s$, by restricting $d_1,\dots,d_n\in\F^*_q$ we have $\Delta(C)\leq 2s$.
Let $S$ be a largest family of pairs $a_i$ and $b_i$ that are collectively linearly independent. Without loss of generality we can assume that $S=\{a_{1}, b_{1} , a_{2}, b_{2},  \dots ,a_{s}, b_{s}\}$. Consider the submatrix $G_S$ of $G$ as $$G_S=\big[\, a_{1} \, b_{1} \, | \, a_{2} \, b_{2} |\, \dots \,|\,a_{s} \, b_{s} \,\big]\in\F_q^{k\times 2s}.$$
Define the matrix $\Omega_{S}$ as
\[\Omega_{S}=\sum_{i=1}^s (a_{i} b_{i}^T-b_{i}a_{i}^T).\]  This  is a $(k\times k)$-matrix having rank $2s$. Now consider a $2s\times 2s$ submatrix $G_S'$ of $G_S$ such that the determinant of $G_S'$ is non-zero. 
Without loss of generality, we assume that $G_S'$ consists of the first $2s$ rows of $G_S$.  
Then \[G_S'=\big[\, a'_{1} \, b'_{1} \, | \, a'_{2} \, b'_{2} |\, \dots \,|\,a'_{s} \, b'_{s} \,\big]\in\F^{2s\times 2s}_q ,
\] where $a'_{i}$ and  $b'_{i}$ are restrictions of $a_{i}$ and $b_{i}$ to the first $2s$ coordinates. 
Now consider the matrix
\[B=G_S' J_s G_S'^T=\sum_{i=1}^{s}(a'_{i}b'^T_{i}-b'
_{i}a'^T_{i}).\]
Then $\rank(B)=2s$, as $G_S'$ and $J_s$ are invertible matrices. Let $x_1,\dots,x_n$ be indeterminates. Consider the following two matrices
\[X(x_1,\dots,x_n)=\sum_{i=1}^{n}x_i(a_ib_i^T-b
_ia_i^T) \,\,\, \text{ and }\,\,\, 
Y(x_1,\dots,x_n)=\sum_{i=1}^{n}x_i(a'_{i}b'^T_{i}-b'
_{i}a'^T_{i}),\]
where $a'_{i}$ and  $b'_{i}$ are restrictions of $a_{i}$ and $b_{i}$ to the first $2s$ coordinates for $i\in\{1,2,\dots,n\}$. The matrix $Y(x_1,\dots,x_n)$ is not identically zero as $B=Y(1,\dots,1, 0,\dots,0)$ and $\det(B)$ is non-zero. Moreover, $$\det(Y(x_1,\dots,x_n))=(\pf(Y(x_1,\dots,x_n)))^2.$$  Following the proof of \cite[Theorem 3]{Lovasz79}, the Pfaffian of $Y(x_1,\dots,x_n)$ is linear in every indeterminate $x_i$. Therefore, for $q>2$, there exists a tuple $(d_1,\dots,d_n)$, $d_i\in\F_q^*$, such that  $\det(Y(d_1,\dots,d_n))\neq0$. Since $Y(d_1,\dots,d_n)$ is a submatrix of $X(d_1,\dots,d_n)$ of rank $2s$, we have $\Delta(C)=2s=gr(L_C)$.
\end{proof}
\begin{corollary}
    \label{HVPlinear}
    Let $C$ be a $[2n,k]_q^s$  code and $G\in\F_q^{k\times 2n}$ be the generator matrix as in \eqref{generator}. Let  $C'$ be a code monomially equivalent to $C$.
    Then $\dim(\hull(C'))\geq k-\Delta(C)$. Moreover, there exists a code $C''$  monomially equivalent to $C$ such that $\dim(\hull(C''))=k-\Delta(C)$.
\end{corollary}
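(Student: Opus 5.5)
The corollary should follow by combining Theorem~\ref{delta=2s} with the hull-dimension formula $\dim(\hull(C))=k-\rank(\Omega)$. The one step that needs care is reducing an arbitrary monomially equivalent code to the diagonal form $C_D$.

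Let $C'$ be monomially equivalent to $C$, with generator matrix $G'=GM'$ for some $M'\in\mathcal{M}_{2n}(\F_q)$. By the preliminaries, $\GL$-equivalence can be factored into an $\SL$-part and a diagonal part $\diag(d_1,1,\dots,d_n,1)$. The $\SL$-part (block matrices in $\SL_2(\F_q)$ together with the permutation $P\otimes I_2$) preserves the symplectic form, so it does not change the Gram matrix up to congruence. Hence
\[
\rank(G'J'G'^T)=\rank(\Omega_D)
\]
for some $D=\diag(d_1,\dots,d_n)$ with $d_i\in\F_q^*$. By the paper's remark identifying the relevant monomial equivalences with these $GM$, this justifies restricting attention to codes of the form $C_D$.

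If a general monomial matrix does not preserve the pairing of coordinates $\{i,n+i\}$, I would argue at the level of the paper's convention. That convention, stated in the remarks, says that hull-changing equivalences are exactly the $C_D$. So I would cite those remarks rather than re-derive them.

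For the lower bound, I would use $\dim(\hull(C'))=k-\rank(\Omega_D)$ together with the definition
\[
\Delta(C)=\max_{d_i\in\F_q^*}\rank(\Omega_D).
\]
This gives $\rank(\Omega_D)\le\Delta(C)$, and therefore $\dim(\hull(C'))\ge k-\Delta(C)$.

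For attainment, the maximum in the definition of $\Delta(C)$ is over a finite set, so it is achieved by some $D^*=\diag(d_1^*,\dots,d_n^*)$ with all $d_i^*\in\F_q^*$. The proof of Theorem~\ref{delta=2s} exhibits such a tuple explicitly, using the multilinearity of the Pfaffian and $q>2$. Setting $C''=C_{D^*}$, generated by $GM^*$ with $M^*=\diag(d_1^*,1,\dots,d_n^*,1)$ a monomial matrix, we get $\dim(\hull(C''))=k-\Delta(C)$.

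The main potential obstacle is the reduction step, namely that every monomially equivalent code has Gram rank equal to that of some $\Omega_D$. Everything else is immediate from the definitions.
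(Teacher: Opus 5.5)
Your argument is correct and matches the paper's implicit proof. The corollary follows directly from three facts: $\Delta(C)$ is defined as the maximum of $\rank(\Omega_D)$ over $d_i\in\F_q^*$; $\dim(\hull)=k-\rank(\Omega)$; and $\GL$-type equivalences reduce to the codes $C_D$. The maximum is attained, for example by the tuple constructed in Theorem~\ref{delta=2s}.

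The reduction you flag is more than delicate: it is false for monomial matrices that break the pairs $\{i,n+i\}$. Take $G=[e_1\,e_1\mid e_2\,e_2]$, so $\Delta(C)=0$. Swapping the columns $b_1$ and $a_2$ gives Gram matrix $2(e_1e_2^T-e_2e_1^T)$, hence an LCD code when $q$ is odd, violating the bound. So the statement must be read, as the paper tacitly does, for pair-preserving monomial equivalences, which is exactly the convention you adopt.
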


\begin{corollary}\label{LCD}
Let $C$ be a $[2n,k]_q^s$ code. Then $\Delta(C)=k$ if and only if $C$ is monomially equivalent to an LCD code.
\end{corollary}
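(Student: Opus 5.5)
The statement immediately above is Corollary \ref{LCD}. I would derive it directly from Corollary \ref{HVPlinear} together with the formula $\dim(\hull(C))=k-\rank(\Omega)$. The plan is to prove each direction separately, using only that the symplectic hull dimension of a code equals $k$ minus the rank of its Gram matrix. I would also use that monomial equivalence, and hence $\GL$-equivalence, reduces up to $\mathrm{SL}$-transformations to the diagonal scalings $M=\diag(d_1,1,\dots,d_n,1)$.

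For the forward direction, assume $\Delta(C)=k$. By the second part of Corollary \ref{HVPlinear}, there is a code $C''$ monomially equivalent to $C$ with $\dim(\hull(C''))=k-\Delta(C)=0$. So $C''$ is LCD. Concretely, one takes $d_1,\dots,d_n\in\F_q^*$ as produced in the proof of Theorem \ref{delta=2s}, for which $\rank(\Omega_D)=2s=k$. Then $\Omega_D$ is a nonsingular $k\times k$ matrix, and $C_D$ has trivial hull.

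For the converse, suppose $C$ is monomially equivalent to an LCD code $C'$. By the first part of Corollary \ref{HVPlinear}, $0=\dim(\hull(C'))\ge k-\Delta(C)$, so $\Delta(C)\ge k$. On the other hand, $\Delta(C)$ is the maximal rank of a $k\times k$ matrix, so $\Delta(C)\le k$. Hence $\Delta(C)=k$.

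The only point needing care is the inequality $\dim(\hull(C'))\ge k-\Delta(C)$ for an arbitrary monomial equivalence, not just diagonal ones. I would handle it as in Remark \ref{GLequimonomial}: the $\mathrm{SL}$-factors and the permutation $P\otimes I_2$ preserve the symplectic form, so the Gram matrix of $C'$ has the same rank as some $\Omega_D$. That rank is therefore at most $\Delta(C)$. Since this is exactly the content of Corollary \ref{HVPlinear}, which we may assume, no real obstacle remains. As a side remark, the argument also shows that $k$ must be even whenever $C$ is equivalent to an LCD code.
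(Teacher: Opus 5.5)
Your proof is correct and matches the paper, which gives no separate argument and treats this as an immediate consequence of Corollary~\ref{HVPlinear}: the forward direction comes from the attained hull dimension $k-\Delta(C)=0$, and the converse from $0\ge k-\Delta(C)$ together with $\Delta(C)\le k$. Your reduction of an equivalence to $\diag(d_1,1,\dots,d_n,1)$ times a symplectic matrix uses the paper's intended meaning of monomial equivalence, namely one that respects the coordinate pairs $(x_i,y_i)$; for an arbitrary $M\in\mathcal{M}_{2n}(\F_q)$ that mixes pairs, the bound $\dim(\hull(C'))\ge k-\Delta(C)$ need not hold.
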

Let us now consider MDS codes.

\begin{theorem}
     Let $C$ be an MDS $[2n,k]_q^s$ code and $G\in\F_q^{k\times 2n}$ be a generator matrix as in Equation \eqref{generator}. If $k$ is even, then $C$ is monomially equivalent to an LCD code.
\end{theorem}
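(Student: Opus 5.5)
By Corollary~\ref{LCD}, it suffices to show that $\Delta(C)=k$. By Theorem~\ref{delta=2s}, $\Delta(C)=2s$, where $s$ is the largest number of pairs $\{a_i,b_i\}$ whose $2s$ vectors are jointly linearly independent. So the goal is to exhibit $k/2$ indices $i_1,\dots,i_{k/2}$ such that the columns $a_{i_1},b_{i_1},\dots,a_{i_{k/2}},b_{i_{k/2}}$ form a basis of $\F_q^{k\times 1}$. Equivalently, the $k\times k$ submatrix of $G$ formed by these $k/2$ coordinate pairs must be invertible.

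The key step converts the MDS property into such an invertibility statement. For even $k$, the Singleton bound gives $d=n-k/2+1$. Take any set $T$ of $k/2$ coordinate positions, and let $G_T$ be the $k\times k$ submatrix of $G$ consisting of the columns $a_i,b_i$ with $i\in T$. Suppose $G_T$ is singular. Then there is a nonzero $x\in\F_q^{1\times k}$ with $xG_T=0$. The codeword $c=xG$ is nonzero, since $G$ has full row rank $k$, and it vanishes in both components of every position in $T$. Hence its symplectic weight satisfies $w_s(c)\le n-k/2<d$, which is a contradiction. Therefore every choice of $k/2$ pairs gives an invertible submatrix. In particular, for $T=\{1,\dots,k/2\}$ the vectors $a_1,b_1,\dots,a_{k/2},b_{k/2}$ are linearly independent, so $s\ge k/2$.

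For the other inequality, note that $s\le k/2$ trivially, since $2s$ independent vectors live in $\F_q^{k}$. This requires $n\ge k/2$, which holds because $k\le 2n$. Hence $\Delta(C)=2s=k$, and Corollary~\ref{LCD} (equivalently Corollary~\ref{HVPlinear}, with $\dim\hull(C'')=k-\Delta(C)=0$) gives a monomially equivalent LCD code. This implicitly uses the standing assumption $q\ge 3$ from Theorem~\ref{delta=2s}.

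The main obstacle is the weight argument: a vanishing combination of columns must translate into a low-weight codeword. This works because symplectic weight counts positions where the pair $(x_i,y_i)$ is nonzero, and our codeword vanishes on whole pairs. The parity condition on $k$ matters here. When $k$ is odd, $\Delta(C)$ is even and cannot equal $k$, consistent with the hypothesis.
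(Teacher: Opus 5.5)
Your proposal is correct and follows essentially the same route as the paper. In both, the MDS condition $d=n-k/2+1$ forces every $k\times k$ submatrix formed by $k/2$ coordinate pairs to be invertible (otherwise a nonzero codeword vanishing on those pairs would have symplectic weight at most $n-k/2$), so $\Delta(C)=k$, and Corollary~\ref{LCD} finishes the argument.
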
  
\begin{proof}
Let $m=\frac{k}{2} $.  We claim that if $C$ is an MDS code, then any collection of $m$ pairs
\[\{a_{i_1}, b_{i_1},a_{i_2}, b_{i_2},\dots, a_{i_{m}}, b_{i_{m}}\}\] is collectively linearly independent, for any $i_1,\dots,i_{m}\in\{1,2,\dots,n\}$. If not, there exist $j_1,\dots,j_{m}\in\{1,2,\dots,n\}$ such that  
$\{a_{j_1},b_{j_1}, a_{j_2},b_{j_2},\dots, a_{j_{m}}, b_{j_{m}}\}$
is not collectively linearly independent.  Then 
    \[G'=\big[a_{j_1} \, b_{j_1} | \, a_{j_2} \, b_{j_2} |\,\dots \,|\, a_{j_{m}}  b_{j_{m}}\big]\] 
    is a $k\times k$ singular matrix, i.e., there exists a non-zero $u\in\F_q^k$ such that 
    \[
    uG'=\big[\,ua_{j_1}\, ub_{j_1} \,|\, ua_{j_2}\, ub_{j_2} \,|\, \dots \,|\, ua_{j_m}\, ub_{j_m}\,\big]=0.
    \]
    Now consider  a codeword $c=uG$. Then it is clear that $c$ is non-zero and $w_s(c)\leq n-m$, contradicting the assumption that $C$ is MDS, i.e., $d=n+1-m$. Therefore, $\Delta(C)=k$ and the result follows from Corollary \ref{LCD}.
\end{proof} 

\begin{remark}\label{quantum}
    Let us see what $\Delta(C)$ means in projective terms. Consider the projective space $\PG(k-1,q)$ and the set $\mathcal{X}=\{\langle a_i, b_i\rangle:1\leq i \leq n\}\subseteq\PG(k-1,q)$, where $\langle a_i, b_i\rangle$ is a line if $a_i$ and $b_i$ are non-collinear vectors, and $\langle a_i, b_i\rangle$  is a point if $a_i$ and $b_i$ are collinear vectors. The set $\mathcal{X}$ generates $\PG(k-1,q)$. The number  $\Delta(C)/2$ is the maximum number of skew lines in $\mathcal{X}$. See also  \cite{ball2025}.
\end{remark}

\section{Hull variation for symplectic self-orthogonal codes}\label{HVPselforthogonal}

This section is devoted to the study of hull variation of a symplectic self-orthogonal code $C$ of dimension $k$. We will prove that it is always possible to find a monomially equivalent code $C'$ such that $\dim(\hull(C'))=k-2r$, for each $r\in\{0, 1, \dots,\Delta(C)/2\}$.

Let $C$ be a  self-orthogonal  $[2n,k]_q^s$ code with a generator matrix formed by the column vectors $a_i,b_i$ in $ \F_q^{k\times 1}$ for $i,j\in\{1,2,\dots,n\}$ as follows
\begin{equation*}
    G = \big[\, a_1 \, b_1 \, | \, a_2 \, b_2 |\, \dots \,|\,a_n \, b_n \,\big].
\end{equation*}
  Since the code $C$ is self-orthogonal, we have \begin{equation}\label{orthogonalGram}
    \Omega=GJ_nG^T=\sum_{i=1}^{n}(a_ib_i^T-b
_ia_i^T)=0. 
\end{equation}

Let $S$ be a largest family of pairs $a_i$ and $b_i$ that are collectively linearly independent. Without loss of generality, we can assume that $S=\{a_{1}, b_{1} , a_{2}, b_{2},  \dots ,a_{s}, b_{s}\}$. Now, select  $r\leq s$ pairs of $a_i, b_i$ from the set $S$, and define a matrix \[\Omega_{D_r}=(a_{1}b_{1}^T-b_{1}a_{1}^T)+\,(a_{2}b_{2}^T-b_{2}a_{2}^T)+\,\dots+(a_{r}b_{r}^T-b_{r}a_{r}^T)\] whose rank is $2r$. Choose $\gamma\in\F_q^*$ such that $1+\gamma\neq0$ and consider 
\[
\Omega_{D'_r}=\gamma\,\Omega_{D_r}+\Omega=\sum_{i=1}^{r}(1+\gamma)(a_ib_i^T-b_ia_i^T)+\sum_{i=r+1}^{n}(a_ib_i^T-b_ia_i^T),
\]
where $\Omega$ is as in \eqref{orthogonalGram}.
Therefore, $\Omega_{D'_r}\in L_C$ such that $\rank \Omega_{D'_r}=2r$.  
\begin{theorem}\label{hullvariationself}
     Let $C$ be a self-orthogonal $[2n,k,d]_q^s$ code with a generator matrix \[G = \big[\, a_1 \, b_1 \, | \, a_2 \, b_2 |\, \dots \,|\,a_n \, b_n \,\big].\]
     For every code $C'$ monomially equivalent to $C$, we have $\dim(\hull(C'))\geq k-\Delta(C).$
     Moreover, for every integer $0 \le r \le \Delta(C)/2$, there exists a $[2n, k,d]_q^s$ code $C_{D_r}$ monomially equivalent to $C$ such that
$$\dim(\hull(C_{D_r})) = k-2r.$$
\end{theorem}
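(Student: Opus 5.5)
The lower bound is immediate from Corollary \ref{HVPlinear}, which holds for any symplectic code and in particular for self-orthogonal ones. Concretely, every code $C'$ monomially equivalent to $C$ can be taken to be of the form $C_D$ with Gram matrix $\Omega_D\in L_C$. Hence $\rank(\Omega_D)\le gr(L_C)=\Delta(C)$ by Theorem \ref{delta=2s}, and so
\[
\dim(\hull(C'))=k-\rank(\Omega_D)\ge k-\Delta(C).
\]
Monomial equivalence by $M=\diag(d_1,1,\dots,d_n,1)$ preserves symplectic weights by Remark \ref{GLequimonomial}. So every code in the class keeps the parameters $[2n,k,d]_q^s$, and only the hull dimension needs to be controlled.

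For the existence part, I will use the construction set up just before the statement. By Theorem \ref{delta=2s}, $\Delta(C)=2s$, where $s$ is the size of a largest collectively linearly independent family of pairs; after reordering, this family is $S=\{a_1,b_1,\dots,a_s,b_s\}$. Fix $r$ with $0\le r\le s$. If $r=0$, take $D=I$, which gives $C$ itself with hull dimension $k$. For $r\ge1$, choose $\gamma\in\F_q^*$ with $1+\gamma\ne0$; this is possible since $q\ge3$, for instance $\gamma=1$ when $p\ne2$, and any $\gamma\ne0,1$ in characteristic $2$. Then set $d_i=1+\gamma$ for $1\le i\le r$ and $d_i=1$ for $i>r$. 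All $d_i$ lie in $\F_q^*$, so $C_{D_r}$, generated by $GM$, is monomially equivalent to $C$.

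By self-orthogonality, $\Omega=0$, so the Gram matrix of $C_{D_r}$ is $\Omega_{D'_r}=\gamma\,\Omega_{D_r}+\Omega=\gamma\,\Omega_{D_r}$. It remains to show $\rank(\Omega_{D_r})=2r$. Extend $a_1,b_1,\dots,a_r,b_r$, which are linearly independent as a subfamily of $S$, to a basis of $\F_q^k$ with change-of-basis matrix $A$. Exactly as in the computation preceding Theorem \ref{delta=2s}, we get
\[
A^{-1}\Omega_{D_r}(A^T)^{-1}=\sum_{j=1}^{r}\big(e_{2j-1}e_{2j}^T-e_{2j}e_{2j-1}^T\big),
\]
which has rank $2r$. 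Since $\gamma\ne0$, this gives $\rank(\Omega_{D'_r})=2r$, and therefore $\dim(\hull(C_{D_r}))=k-2r$.

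There is no serious obstacle. The only delicate points are that the scalars $d_i=1+\gamma$ must be nonzero, which is where $q\ge3$ is used, and that the vanishing of $\Omega$ is what removes the contribution of the remaining $n-r$ pairs. The latter is exactly why the full range of even ranks is attainable for self-orthogonal codes, while for general codes only the extremal value is guaranteed.
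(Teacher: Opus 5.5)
Your proof is correct and follows the paper's argument. The lower bound comes from Corollary \ref{HVPlinear}, and the existence part uses the same scaling $d_i=1+\gamma$ for $i\le r$ (with $d_i=1$ otherwise), so that, since $\Omega=0$, the Gram matrix becomes $\gamma\,\Omega_{D_r}$, of rank $2r$; your change-of-basis check of $\rank(\Omega_{D_r})=2r$ and your explicit choice of $\gamma$ make precise two points the paper states without detail.
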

\begin{proof}
 The first statement follows from Corollary \ref{HVPlinear}.  For $r\in\{0,1,\dots,\Delta(C)/2\}$, we see that $$\Omega_{D'_r}=\sum_{i=1}^{r}(1+\gamma)(a_ib_i^T-b_ia_i^T)+\sum_{i=r+1}^{n}(a_ib_i^T-b_ia_i^T)$$  has rank $2r$. Let $C_{D_r}$ be the code generated by $G_{D_r}=GM_r$  where 
  \[M_r=\diag(\underbrace{1+\gamma\,,1\,\ldots,1+\gamma,1}_{r},\underbrace{1,\,1,\ldots,1,\,1}_{n-r}).\]
  Then $C_{D_r}$ is monomially equivalent  to $C$ with hull dimension $k-2r$, for every $0\leq r\leq \Delta(C)/2$. 
\end{proof}
\begin{example}
Let $n$ be an even integer, and let $C$ be a self-dual $[2n,n,2]_q^s$ code with generator matrix $G=[I_n \mid \, A]$, where $A=\begin{bmatrix}
0 & I_{n/2} \\
I_{n/2} & 0
\end{bmatrix}$. Rewrite $G=\left[\begin{array}{cc|cc}
I_{n/2} & 0 &0 & I_{n/2}\\
0& I_{n/2} &  I_{n/2} & 0
\end{array}\right]$. Then $\Delta(C)=n$, since $S=\{e_i,e_{n/2+i}:1\leq i\leq n/2\}$. 
Define a matrix 
\[\Omega_{D_r}=\sum_{i=1}^{r}\left(e_{i}e_{i+n/2}^T-e_{i+n/2}e_{i}^T\right),\] whose rank is $2r$. Let
\[
\Omega=\sum_{i=1}^{n/2}\big(e_{i}e_{i+n/2}^T-e_{i+n/2}e_{i}^T\big)+\sum_{i=1}^{n/2}\big(e_{i+n/2}e_{i}^T-e_{i}e_{i+n/2}^T\big)=0.
\]
Choose $\gamma\in\F_q^*$, such that $1+\gamma\neq0$ and consider $\Omega_{D'_r}=\gamma\Omega_{D_r}+\Omega$. Then we get
\[\Omega_{D'_r}=\sum_{i=1}^{r}(1+\gamma)(e_{i}e_{i+n/2}^T-e_{i+n/2}e_{i}^T)+\sum_{i=r+1}^{n/2}(e_{i}e_{i+n/2}^T-e_{i+n/2}e_{i}^T)+\sum_{i=1}^{n/2}(e_{i+n/2}e_{i}^T-e_{i}e_{i+n/2}^T).\]
 Let $C_{D_r}$ be the code generated by $G_{D_r}=GM_r$  where 
  \[M_r= \diag(\underbrace{1+\gamma\,,1\,\ldots,1+\gamma,1}_{r},\underbrace{1,\,1,\ldots,1,\,1}_{n-r}).\]
  Then, $C_{D_r}$ is monomially equivalent  to $C$ with hull dimension $n-2r$, for every $0\leq r\leq n/2$.
\end{example}

\section{Standard form of self-orthogonal codes}\label{standard}
In this section, we provide a standard form for a self-orthogonal code $C$. We then determine the parity-check matrix of $C$ and the Gram matrix for $C^{\perp_s}$. We conclude this section by giving a structural description of $C^{\perp_s}$. Putting code into a standard form is a basic step towards developing a better understanding of the code.  
\begin{theorem}
Let $C$ be a symplectic self-orthogonal $[2n,k]_q^s$ code. Then $C$ is $\SL$-equivalent to a code $C'$ with generator matrix $G'$ in the form  
\begin{equation}\label{standardform}
G' = \big[\, I_k \; A_1 \mid (A_3 + A_1A_2^{T}) \; A_2 \big],
\end{equation}
where $A_1,A_2 \in \F_q^{k \times (n-k)},$ and
$A_3 \in \F_q^{k \times k}$  is symmetric.
\end{theorem}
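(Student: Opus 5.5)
The plan is to reach the form \eqref{standardform} in two stages. First, I will use $\SL$-equivalences (permutations of coordinate pairs and local $\SL_2(\F_q)$ maps) together with row operations, which change the generator matrix but not the code, to make the left block of the generator matrix contain $I_k$. Second, I will read off the structure of the right block from self-orthogonality, using that $\SL$-equivalence preserves the symplectic inner product. Throughout I write the generator matrix in the layout $G=[X\mid Z]$, with $X,Z\in\F_q^{k\times n}$ the $x$- and $y$-parts.

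\textbf{Step 1 (Gaussian elimination on the $X$-part).} Let $r=\rank(X)$. By row operations and a permutation of the coordinate pairs (the same permutation applied to the $x$- and $y$-columns, which is allowed), I bring $G$ to
\[
G=\begin{bmatrix} I_r & X_1 & Z_1 & Z_2\\ 0 & 0 & Z_3 & Z_4\end{bmatrix},
\]
where $X_1\in\F_q^{r\times(n-r)}$, $Z_3\in\F_q^{(k-r)\times r}$ and $Z_4\in\F_q^{(k-r)\times(n-r)}$. Since $C$ is self-orthogonal, each lower row is symplectically orthogonal to each upper row. Because the lower rows have zero $x$-part, this gives $Z_3+Z_4X_1^T=0$, so $Z_3=-Z_4X_1^T$. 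Hence $[Z_3\ Z_4]$, which has full row rank $k-r$ because $G$ has rank $k$, satisfies $\rank(Z_4)=k-r$.

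\textbf{Step 2 (moving pivots into the $X$-part).} Since $\rank(Z_4)=k-r$, there are $k-r$ columns of $Z_4$, among the pairs $r+1,\dots,n$, that are linearly independent. On each of these pairs I apply the local map $(x,y)\mapsto(y,-x)$, whose matrix $\begin{bmatrix}0&-1\\1&0\end{bmatrix}$ lies in $\SL_2(\F_q)$. This moves those $Z_4$ columns into the $x$-part, up to sign. I then permute these pairs into positions $r+1,\dots,k$. The new $X$-part now has rank $k$ with pivots in its first $k$ columns, so further row operations give
\[
G'=[\,I_k\;A_1\mid B\;A_2\,],\qquad A_1,A_2\in\F_q^{k\times(n-k)},\ B\in\F_q^{k\times k}.
\]
This step is the main obstacle, because one must show that enough independent pivots exist outside the first $r$ pairs. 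The relation $Z_3=-Z_4X_1^T$ from Step 1 is exactly what guarantees this, and it is the only place where self-orthogonality is used in an essential way before the final computation.

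\textbf{Step 3 (symmetry).} The code $C'$ generated by $G'$ is still self-orthogonal, since $\SL$-equivalence preserves the symplectic inner product. Therefore
\[
0=G'J_nG'^T=[I_k\ A_1][B\ A_2]^T-[B\ A_2][I_k\ A_1]^T=B^T+A_1A_2^T-B-A_2A_1^T .
\]
Set $A_3=B-A_1A_2^T$. Then $A_3-A_3^T=(B-B^T)-A_1A_2^T+A_2A_1^T=0$ by the identity above, so $A_3$ is symmetric and $B=A_3+A_1A_2^T$. This is precisely the form \eqref{standardform}.
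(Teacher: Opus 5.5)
Your proof is correct and follows essentially the same route as the paper. Both arguments reduce to a form in which the $x$-part has rank $r$, apply the local map $(x,y)\mapsto(y,-x)$ (the paper's column swap plus sign change) on $k-r$ suitable pairs to produce $I_k$, and then read off the symmetry of $A_3=B-A_1A_2^T$ from $G'J_nG'^T=0$. The only difference is that the paper takes the intermediate form from Nielsen--Chuang, whereas you derive it directly (self-orthogonality gives $Z_3=-Z_4X_1^T$ and hence $\rank(Z_4)=k-r$), which makes your argument self-contained over arbitrary $\F_q$.
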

\begin{proof}
 Let $C$ be a self-orthogonal $[2n,k]_q^s$ code. A form for the  generator matrix $G$ is given in \cite[p. 470]{nielsenbook} as follows

\begin{equation}
\label{nelson}G=\left[\begin{array}{c c c |c c c}
        I_{r}& A_{r\times (k-r)} & B_{r\times (n-k)} &  C_{r\times r} & 0 & D_{r\times (n-k)}\\
        0& 0 & 0 &  E_{(k-r)\times r} & I_{k-r} & F_{(k-r)\times (n-k)}    \end{array}\right].
\end{equation}

 In the Equation \eqref{nelson},  by swapping the columns $r+1,r+2,\dots,k$ with the columns $n+r+1,n+r+2,\dots,n+k$ respectively, and changing signs of columns $n+r+1,n+r+2,\dots,n+k$, we get
\begin{equation*}
G'=\left[\begin{array}{c c c |c c c}
        I_{r}& 0 & B_{r\times (n-k)} &  C_{r\times r} & -A_{r\times (k-r)}& D_{r\times (n-k)}\\
        0& I_{k-r} & 0 &  E_{(k-r)\times r} & 0 & F_{(k-r)\times (n-k)}    \end{array}\right].
\end{equation*}

Then $G'$ generates a code $C'$, $\SL$-equivalent to $C$. Therefore, by relabeling, we  get 
\begin{equation*}
    G'=[I_{k} \,\, A_1\,|\, A' \,\, A_2 ],
\end{equation*}
where $A_1\in\F_q^{_{k\times (n-k)}},A'\in\F_q^{k\times k},A_2\in\F_q^{k\times (n-k)}$. But as $C'$ is a self-orthogonal code, we know that the Gram matrix $\Omega=G'J_nG'^T=0$, i.e., $(A'-A_1A_2^T)^T=A'-A_1A_2^T.$ Let $A_3\coloneq A'-A_1A_2^T$. Then $A_3$ is a symmetric matrix and $A'=A_3+A_1A_2^T$. So we finally get a matrix as follows 
\begin{equation*}
    G'=[I_{k} \,\, A_1\,|\, (A_3+A_1A_2^T) \,\, A_2],
\end{equation*}
where $A_1\in\F_q^{_{k\times (n-k)}}, A_2\in\F_q^{k\times (n-k)}$ and $A_3$ is a symmetric matrix in $\F_q^{k\times k}$.
\end{proof}
\begin{definition}
    Let $C$ be a self-orthogonal $[2n,k]_q^s$ code.
    We say a generator matrix of the code $C$ is in \textit{standard form} if \begin{equation*}
    G=[I_{k} \,\, A_1\,|\, (A_3+A_1A_2^T) \,\, A_2],
\end{equation*}
where $A_1,A_2 \in \F_q^{k \times (n-k)}$,  and
$A_3 \in \F_q^{k \times k}$  is symmetric.
\end{definition}
\begin{theorem}\label{parity}    Let $C$ be a self-orthogonal $[2n,k]_q^s$ code whose generator matrix is in standard form \[
G = \big[\, I_k \; A_1 \mid (A_3 + A_1A_2^{T}) \; A_2 \big],
\] where $A_1,A_2 \in \F_q^{k \times (n-k)},$ and
$A_3 \in \F_q^{k \times k}$  is symmetric. Then a parity check matrix $H$ of $C$ (generator matrix of $C^{\perp_s}$) is given by
\begin{equation*}
H=
\left[
\begin{array}{cc|cc}
I_k & A_1 & A_3+A_1A_2^T & A_2 \\
0_{(n-k)\times k} & I_{n-k} & A_2^T & 0_{(n-k)\times(n-k)} \\
0_{(n-k)\times k} & 0_{(n-k)\times(n-k)} & -A_1^T & I_{n-k}
\end{array}
\right],
\end{equation*}
and the Gram matrix of $C^{\perp_s}$ is
\begin{equation*}
HJH^T=
\left[
\begin{array}{c c c}
0_{k\times k} & 0_{k\times (n-k)} & 0_{k \times (n-k)} \\
0_{(n-k)\times k} & 0_{(n-k)\times (n-k)}& I_{n-k}\\
0_{(n-k)\times k} & -I_{n-k}& 0_{(n-k)\times (n-k)}
\end{array}
\right].
\end{equation*}
\end{theorem}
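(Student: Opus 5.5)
The plan is to verify directly three facts about $H$: it has full rank $2n-k$; every row of $H$ is symplectically orthogonal to every row of $G$; and $HJ_nH^T$ has the displayed block form. Since $\dim C^{\perp_s}=2n-k$, the first two facts give that the rows of $H$ span $C^{\perp_s}$.

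For the rank, delete the last $n-k$ columns of the left half and the last $n-k$ columns of the right half in a suitable order. The column blocks $I_k$, $I_{n-k}$ (second block of the left half) and $I_{n-k}$ (last block of the right half) then give a block upper-triangular $(2n-k)\times(2n-k)$ submatrix with identity diagonal blocks. Hence $\rank H=2n-k$.

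For orthogonality, note that the first block row of $H$ is $G$ itself, and $GJ_nG^T=0$ by self-orthogonality. This can also be checked directly: with $A'=A_3+A_1A_2^T$ one gets $GJ_nG^T=A'^T-A'+A_1A_2^T-A_2A_1^T$, which vanishes because $A_3$ is symmetric. For a general pair of block rows I will use the formula
\[
[X_1\,X_2\,|\,Y_1\,Y_2]\,J_n\,[X'_1\,X'_2\,|\,Y'_1\,Y'_2]^T
=X_1Y_1'^T+X_2Y_2'^T-Y_1X_1'^T-Y_2X_2'^T .
\]
\begin{itemize}
\item Rows of $G$ against the second block row $[0\; I\,|\,A_2^T\;0]$: the result is $I\cdot A_2+A_1\cdot 0-A'\cdot 0-A_2\cdot I=0$.
\item Rows of $G$ against the third block row $[0\;0\,|\,-A_1^T\;I]$: the result is $I\cdot(-A_1)+A_1\cdot I=0$.
\end{itemize}
So all rows of $H$ lie in $C^{\perp_s}$, and therefore $H$ generates $C^{\perp_s}$.

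For the Gram matrix $HJ_nH^T$, the first block row and the first block column are zero by the orthogonality just shown. The remaining blocks are as follows.
\begin{itemize}
\item Second against second: $I\cdot 0-A_2^T\cdot 0=0$.
\item Third against third: both rows have zero left halves, so the result is $0$.
\item Second against third: $0\cdot(-A_1)+I\cdot I-A_2^T\cdot 0-0\cdot 0=I_{n-k}$.
\end{itemize}
The block (third, second) is then $-I_{n-k}$ by antisymmetry, which gives exactly the displayed matrix.

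The main obstacle is only bookkeeping: keeping the column partition $(k,\,n-k\,|\,k,\,n-k)$ consistent when pairing blocks, and getting the signs right. A sign slip in the third row, $-A_1^T$ versus $A_1^T$, would break orthogonality, so I will double-check that pairing carefully.
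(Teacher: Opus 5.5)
Your proposal is correct and follows the paper's proof: full rank from the three identity blocks, $GJ_nH^T=0$, and a direct block computation of $HJ_nH^T$. You simply write out the block products the paper calls straightforward, and your sign checks are all right. There is one wording slip in the rank step: you should \emph{keep} the first $k$ columns together with the last $n-k$ columns of each half, i.e.\ delete only the first $k$ columns of the right half. This gives the block upper-triangular matrix with block rows $[I_k\; A_1\; A_2]$, $[0\; I_{n-k}\; 0]$, $[0\; 0\; I_{n-k}]$ that you go on to describe.
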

\begin{proof}
    First note that all $(2n-k)$ rows of $H$ are linearly independent due to the presence of $I_{k}$, $I_{n-k}$, and $I_{n-k}$ in different rows and on different column blocks. Therefore, the rank of $H$ is exactly $2n-k$. We next see that $GJH^T=0$ and hence $H$ is a parity check matrix for $C$. Calculation of $HJH^T$ is straightforward.
\end{proof}
\noindent Let $V$ be  an $m$-dimensional vector space over $\F_q$. Let $\beta$ be a non-degenerate alternating bilinear form. Then it is well known that $\dim(V)=m=2n$ and $V$ can be written as an orthogonal direct sum of hyperbolic lines. This means that, if $\{e_1,e_2,\dots,e_n,f_1,f_2,\dots,f_n\}$ is a symplectic basis of $V$, i.e., $\beta(e_i,e_j)=0$, $\beta(f_i,f_j)=0$ for all $i,j$, and $\beta(e_i,f_j)=\delta_{ij}$, where $i,j\in\{1,2,\dots,n\}$ and $\delta_{ij}=\begin{cases}
0, & i\neq j  \\
1, & i=j
\end{cases}$,
then  $$V=\langle e_1, f_1\rangle\oplus\langle e_2, f_2\rangle\oplus\dots\oplus\langle e_n, f_n\rangle.$$ 
If $U$ is a subset of a vector space $V$, then we define $U^{\perp}=\{v\in V : \beta(u,v)=0, \text{for all } u\in U\}$. If $U$ is a subspace of $V$, then $\beta|_U:U\times U \rightarrow \F_q$ is an alternating form on $U$ and the radical of $U$ is $\rad(U)=U\cap U^{\perp}$. 

Using the description of the Gram matrix of $C^{\perp_s}$ from Theorem \ref{parity}, we can see the following. Let $c_i$ be the $i$-th row of  the matrix $H$ for $1\leq i\leq n$ and $d_{k+j-n}$ be the $j$-th row of $H$, where $n+1\leq j\leq 2n-k$. Then, $C^{\perp_s}$ has a nice description $$C^{\perp_s} = \langle c_1,c_2,\dots, c_k\rangle\oplus \langle c_{k+1},d_{k+1}\rangle\oplus\dots\oplus\langle c_{n},d_{n}\rangle.$$ 
If we consider $C^{\perp_s}$ as a vector space with symplectic inner product obtained by restricting the original symplectic inner product of $V$ to $C^{\perp_s}$, then $$\Rad(C^{\perp_s})=\langle c_1,c_2,\dots, c_k\rangle=C\cap C^{\perp_s}=\hull(C).$$  Since $C$ is self-orthogonal,  we have $\hull(C)=C$, hence $$C^{\perp_s} = C \oplus \langle c_{k+1},d_{k+1}\rangle\oplus\dots\oplus\langle c_{n},d_{n}\rangle.$$

\section{Hull variation for symplectic self-dual  codes}\label{HVPselfdual}
In this section, we restrict to the self-dual case. 
 Let $C$ be a self-dual $[2n,n]_q^s$  code. As $k=n$, a standard form of  generator matrix from Equation \eqref{standardform} becomes 
 \[G=[I\,|\,A],\] where $A\in\F_q^{n\times n}$ is a symmetric matrix. Let $M=\diag(d_1,d_2,\dots,d_n,1,1,\dots,1)$ and  $D=\diag(d_1,d_2,\dots,d_n)$, where $d_1,\dots,d_n\in \F_q^*$. 
 Note that we have rewritten the matrix $M$ according to the rearrangement of the columns of the generator matrix $G$.
 
 The code $C_D$ generated by the matrix $G_D=GM=[D\,|\, A]$ gives a code monomially equivalent to $C$. As we did previously, we are interested in computing the symplectic hull dimension of $C_D$. Let 
\begin{equation}\label{omegaself}
    \Omega_D=G_DJG^T_D=[D \, A] \begin{bmatrix}
0 & I_n \\
-I_n & 0
\end{bmatrix}\begin{bmatrix}
D \\
A
\end{bmatrix} = DA-AD.
\end{equation}
We see that $\Omega_D=[(d_i-d_j)a_{ij}]$ is an $(n\times n)$ alternating matrix, hence $\rank(\Omega_D)$ is always even and $\dim(\hull(C_D))=n-\rank(\Omega_D)$. We can easily see that \[\rank(\Omega_D)\leq \min\{n,2\rank(A)\}.\] 
We now use our Theorem \ref{hullvariationself} to give the following result.
\begin{corollary}
    Let $C$ be a self-dual $[2n,n]_q^s$ code.  For every code $C'$, monomially equivalent to $C$, we have $\dim(\hull(C'))\geq n-\Delta(C).$
     Moreover, for every integer $0 \le r \le \Delta(C)/2$, there exists a monomially equivalent $[2n, n]_q^s$ code $C_{D_r}$ such that
$$\dim(Hull(C_{D_r})) = n-2r.$$
\end{corollary}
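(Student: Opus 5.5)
This corollary is a direct specialization of Theorem~\ref{hullvariationself} to the case $k=n$, so the plan is simply to verify that a self-dual code satisfies that theorem's hypotheses and then read off the conclusion.

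The first step is to note that a self-dual $[2n,n]_q^s$ code $C$ satisfies $\hull(C)=C=C^{\perp_s}$. In particular it is self-orthogonal, and its dimension is $k=n$.

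We then rearrange the generator matrix $G=[I\,|\,A]$ into the paired form $[\,a_1\,b_1\,|\,\dots\,|\,a_n\,b_n\,]$ used in Section~\ref{HVPselforthogonal}. Here $a_i=e_i$ and $b_i$ is the $i$-th column of $A$. The monomial matrix $M=\diag(d_1,\dots,d_n,1,\dots,1)$ corresponds, after this column rearrangement, to $\diag(d_1,1,\dots,d_n,1)$, as noted in the remarks of Section~\ref{pre}. The quantity $\Delta(C)$ is therefore the same invariant in both settings, and this is the only point that needs a brief check. Substituting $k=n$ into Theorem~\ref{hullvariationself} then gives both claims: every monomially equivalent $C'$ satisfies $\dim(\hull(C'))\ge n-\Delta(C)$, and for each $0\le r\le\Delta(C)/2$ there is a monomially equivalent code $C_{D_r}=GM_r$ with hull dimension $n-2r$.

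There is no real obstacle beyond confirming the bookkeeping between the two column orderings. If desired, one can also give $C_{D_r}$ concretely: choose $\gamma$ with $\gamma\neq0$ and $1+\gamma\neq0$, and scale by $1+\gamma$ the $x$-coordinates of $r$ pairs belonging to a maximal independent family $S$. By \eqref{omegaself}, the resulting Gram matrix is $DA-AD$ with $D$ having $r$ entries equal to $1+\gamma$ and the rest equal to $1$, and its rank is $2r$ by the argument preceding Theorem~\ref{hullvariationself}.
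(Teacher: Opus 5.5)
Your proposal is correct and is exactly the paper's argument: the corollary is just Theorem~\ref{hullvariationself} specialized to $k=n$. The only check needed is that $\Delta(C)$ is unchanged when you pass between $[I\mid A]$ and the paired form $[a_1\,b_1\mid\cdots\mid a_n\,b_n]$, and you did that. One wording fix for your optional explicit construction: take $S$ to be a \emph{largest} collectively independent family, not merely an inclusion-maximal one, so that it contains $\Delta(C)/2$ pairs and every $r\le\Delta(C)/2$ is reachable.
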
     
Generally, it is not easy to find $\Delta(C)$, but since the generator matrix of a self-dual code is well structured, we can further translate our problem into the existence of particular minors of $A$. This case was considered in \cite[Theorem 4.5]{li2024}. We improve their statement as follows. 

\begin{theorem}\label{minorhull}
Let $C$ be a self-dual $[2n, n]_q^s$ code with generator matrix $G=[I_n \mid A]$, where $A\in \mathbb{F}_q^{n\times n}$ is symmetric. There exists a code $C'$ monomially equivalent to $C$ with $\dim(\hull(C'))=n-2r$ if and only if there exists a non-zero $r\times r$ minor of $A$ not containing any diagonal entries of $A$.
\end{theorem}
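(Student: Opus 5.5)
The plan is to reduce both directions to the inequality $\Delta(C)\ge 2r$, and then read that inequality off the pair structure of $G=[I_n\mid A]$.

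By the corollary above (the self-dual specialization of Theorem~\ref{hullvariationself}), the hull dimensions attained in the monomial-equivalence class of $C$ are exactly $n-2r$ for $0\le r\le \Delta(C)/2$. The lower bound is Corollary~\ref{HVPlinear}, and every value $n-2r$ in that range is realized. So the statement is equivalent to:
\[
\Delta(C)\ge 2r \iff A[R,T] \text{ is nonsingular for some } R,T\subseteq\{1,\dots,n\} \text{ with } |R|=|T|=r,\ R\cap T=\emptyset.
\]
Here an $r\times r$ minor avoids the diagonal entries exactly when its row set $R$ and column set $T$ are disjoint. The case $r=0$ is trivial, with the empty minor equal to $1$.

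In the regrouped form \eqref{generator}, the $i$-th pair of columns of $G$ is $(a_i,b_i)=(e_i,\alpha_i)$, where $\alpha_i$ denotes the $i$-th column of $A$. By Theorem~\ref{delta=2s}, $\Delta(C)=2s$, where $s$ is the largest number of pairs $\{e_i,\alpha_i\}$ that are collectively linearly independent.

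For ($\Rightarrow$), suppose $\Delta(C)\ge 2r$. Then there is a set $T$ of $r$ indices such that the $2r$ vectors $\{e_i,\alpha_i : i\in T\}$ are linearly independent. Project onto the coordinates outside $T$, which kills $\operatorname{span}\{e_i:i\in T\}$. Independence then forces the restricted columns $\alpha_i|_{T^c}$, $i\in T$, to be independent. Hence the $(n-r)\times r$ matrix $A[T^c,T]$ has rank $r$. It therefore has a nonsingular $r\times r$ submatrix $A[R,T]$ with $R\subseteq T^c$, so $R\cap T=\emptyset$, as required.

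For ($\Leftarrow$), take $R,T$ disjoint with $\det A[R,T]\neq 0$. I would avoid the matroid argument and exhibit the rank directly using \eqref{omegaself}, namely $\Omega_D=[(d_i-d_j)a_{ij}]$. Choose $\gamma\in\F_q^*$ with $1+\gamma\neq 0$, which is possible since $q\ge 3$. Set $d_i=1+\gamma$ for $i\in R$ and $d_j=1$ otherwise. Then the principal submatrix of $\Omega_D$ on $R\cup T$ is
\[
\gamma\begin{bmatrix} 0 & A[R,T]\\ -A[R,T]^T & 0\end{bmatrix},
\]
using the symmetry of $A$. Its determinant is $\gamma^{2r}\det(A[R,T])^2\neq 0$, so $\rank(\Omega_D)\ge 2r$ and hence $\Delta(C)\ge 2r$.

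I expect no serious obstacle. The only delicate points are the translation between hull dimensions and $\Delta(C)$, which is handled by the corollary, and the projection argument in ($\Rightarrow$). In that argument I must check that independence of the $2r$ vectors really forces rank $r$ for the block $A[T^c,T]$ rather than for the full columns. This holds because any dependence among the projected $\alpha_i$ lifts to a dependence among $\{e_i,\alpha_i : i\in T\}$ after subtracting suitable multiples of the $e_i$, $i\in T$.
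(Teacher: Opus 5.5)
Your proof is correct. The forward direction is the paper's own argument. Deleting the rows indexed by $T$ from the $n\times 2r$ matrix of selected columns is exactly your projection onto $T^c$. You also make explicit the inequality $2r\le\Delta(C)$, via Corollary~\ref{HVPlinear}, which the paper uses tacitly when it picks $r$ pairs from a maximal collectively independent family.

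The converse is where you genuinely differ. The paper permutes coordinates so that the row set becomes $\{1,\dots,r\}$, notes that the $r\times(n-r)$ block $A'_{r,n-r}$ then has rank $r$, and imports the conclusion from \cite[Theorem 4.5]{li2024}. You instead write down $D$ explicitly, check a principal submatrix of $\Omega_D=[(d_i-d_j)a_{ij}]$, and then use Theorem~\ref{hullvariationself} to reach $n-2r$ exactly. This makes the argument self-contained. It also packages the result as a single criterion: $\Delta(C)\ge 2r$ if and only if $A[R,T]$ is nonsingular for some disjoint $R,T$ of size $r$. Corollary~\ref{minormax} then follows immediately.

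You can shorten the converse further. For your choice $d_i=1+\gamma$ on $R$ and $d_j=1$ off $R$, the entry $(d_i-d_j)a_{ij}$ vanishes whenever $i,j$ are both in $R$ or both outside $R$. So after reordering, $\Omega_D=\gamma\begin{bmatrix}0 & A[R,R^c]\\ -A[R,R^c]^T & 0\end{bmatrix}$, whose rank is exactly $2\rank A[R,R^c]=2r$. Hence $C_D$ itself already has hull dimension $n-2r$. In this direction you need neither Theorem~\ref{hullvariationself} nor the Lov\'asz-based equality $\Delta(C)=2s$. This is precisely the configuration (a rank-$r$ block $A'_{r,n-r}$) that the paper hands off to \cite{li2024}.
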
 
\begin{proof}
  Let $C'$ be a code monomially equivalent to $C$ such that $\dim(\hull(C'))=n-2r$, i.e., there exists a diagonal matrix $D'$ such that $\rank(\Omega_{D'})=2r$. Since the maximum of $\rank(\Omega_D)$ is $\Delta(C)$, there exists a set $S$ of cardinality $\Delta(C)$, containing pairs $e_i$ and $a_i$ of $G=[I\,|\,A]$, such that they are collectively linearly independent. Take $r$ pairs of $e_i,a_i$ from $S$, say
\[
e_{i_1}, e_{i_2},\dots,e_{i_r},a_{i_1},a_{i_2},\dots,a_{i_r}.
\]
Consider an $(n\times 2r)$-matrix, say $B$, containing the above vectors as columns. Then $\rank(B)=2r$. 
By deleting  the rows $i_1,i_2,\dots,i_r$, we obtain $B'$ as $$B'= [0_{(n-r)\times r} \,|\, A_{\{ j_1, j_2,\dots, j_{n-r}\};\{i_1,\dots,i_r\}}],$$  where $\{1,2,\dots,n\}\setminus\{i_1,\dots,i_r\}=\{j_1, j_2,\dots, j_{n-r}\}$.  
Therefore,  $\rank(B')=r$, i.e., there exists a non-zero $r\times r$ minor in $A_{\{ j_1, j_2,\dots, j_{n-r}\};\{i_1,\dots,i_r\}}$ not containing the diagonal entries of $A$. This is the required minor.

Conversely, assume that there exists a non-zero  $r\times r$ minor associated to a submatrix of $A$, say $M$, containing rows and columns indexed by $\mathcal{R}=\{i_1,i_2,\dots,i_r\}$ and $\mathcal{S}=\{j_1,j_2,\dots,j_r\}$  of $A$ respectively, i.e.,
    $$M=A_{\{i_1,i_2\dots,i_r\};\{j_1,j_2,\dots,j_r\}},$$
   such that $\mathcal{R}\cap \mathcal{S}=\varnothing$, as we want to avoid diagonal entries.
   Let $\sigma$ be a permutation  that sends the set $\mathcal{R}$ to the set $\{1,2,\dots,r\}$.  Let $P$ be the permutation matrix associated with $\sigma$ and $G'=[I\,| \,A']$ where $A'=PAP^T$, giving a code monomially equivalent to $C$. 
   Let $A'_{r,\,n-r}$ denote the submatrix of $A'$ formed by the first $r$ rows and the last $n-r$ columns. The previous operation permutes rows and columns of matrix $A$ such that $M$ becomes an $(r\times r)$-submatrix of $A'_{r,n-r}$. This is due to the fact that if $\mathcal{S}\cap \mathcal{R}=\varnothing$ then $\sigma (\mathcal{S})\cap \sigma(\mathcal{R})=\varnothing$. Then $\rank(A'_{r,n-r})=r$. By \cite[Theorem 4.5]{li2024}, there exists a code $C'$ monomially equivalent to $C$  with symplectic hull of dimension $n-2r$.\end{proof}

\begin{corollary}\label{minormax}
   Let $C$ be a self-dual $[2n, n]_q^s$ code with generator matrix $G=[I \,|\, A]$.  If $r$ is the largest integer such that there exists a non-zero  $r\times r$ minor of $A$ not containing the diagonal entries, then $\Delta(C)=2r$.
\end{corollary}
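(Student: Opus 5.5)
The plan is to derive Corollary~\ref{minormax} from Theorem~\ref{minorhull} together with the two-sided hull statement for self-orthogonal codes (Theorem~\ref{hullvariationself}, specialised to the self-dual case in the preceding corollary). Let $r$ be the largest integer for which $A$ has a non-zero $r\times r$ minor avoiding the diagonal entries of $A$. We show $\Delta(C)\ge 2r$ and $\Delta(C)\le 2r$.

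For the lower bound, apply the ``if'' direction of Theorem~\ref{minorhull} to this minor. It gives a code $C'$ monomially equivalent to $C$ with $\dim(\hull(C'))=n-2r$. By Corollary~\ref{HVPlinear}, every code monomially equivalent to $C$ has hull dimension at least $n-\Delta(C)$. Hence $n-2r\ge n-\Delta(C)$, that is, $\Delta(C)\ge 2r$.

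For the upper bound, $\Delta(C)$ is even, so set $t=\Delta(C)/2$. By Corollary~\ref{HVPlinear}, or by the preceding corollary with its parameter equal to $t$, there is a code $C''$ monomially equivalent to $C$ with $\dim(\hull(C''))=n-\Delta(C)=n-2t$. The ``only if'' direction of Theorem~\ref{minorhull} then gives a non-zero $t\times t$ minor of $A$ containing no diagonal entries. Maximality of $r$ forces $t\le r$, so $\Delta(C)\le 2r$. Combining the two bounds gives $\Delta(C)=2r$.

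The main point to watch is that both directions of Theorem~\ref{minorhull} are used with matching notions of equivalence. Throughout, ``monomially equivalent'' means equivalent via the diagonal scalings $D$ together with coordinate permutations. Corollary~\ref{HVPlinear} bounds the hull dimension over exactly this class, since permutations do not change the rank of the Gram matrix. If one wants to avoid relying on the ``only if'' direction, there is a direct alternative. Take $t$ collectively independent pairs $e_{i_j},a_{i_j}$, which exist by the definition of $s$ in Theorem~\ref{delta=2s}. Deleting the rows $i_1,\dots,i_t$ leaves a rank-$t$ block of $A$ with disjoint row and column index sets, and this block contains the required off-diagonal minor.
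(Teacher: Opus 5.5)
Your proof is correct and matches the paper. The paper states this corollary without a separate proof, as an immediate consequence of Theorem~\ref{minorhull} together with Corollary~\ref{HVPlinear} (the minimum attainable hull dimension is $n-\Delta(C)$), and your two inequalities spell out exactly that derivation; the one wording to tighten is that the permutations must act on coordinate pairs, i.e.\ as $P\otimes I_2$ (which is what $A\mapsto PAP^T$ realizes), since an arbitrary permutation of the $2n$ coordinates need not preserve the rank of the Gram matrix.
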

    \begin{corollary}
    Let $C$ be a symplectic self-dual  $[2n, n]_q^s$ code with generator matrix $G=[I \,|\, A]$. There exists a code $C'$ monomially equivalent to $C$ with $\dim(\hull(C'))< n$  if and only if  $A$ is not a diagonal matrix.
\end{corollary}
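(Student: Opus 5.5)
The statement to prove is the last corollary: there is a monomially equivalent $C'$ with $\dim(\hull(C'))<n$ if and only if $A$ is not diagonal. The plan is to derive it directly from Theorem \ref{minorhull} (equivalently Corollary \ref{minormax}) by specializing to $r=1$. Since $C$ is self-dual, $\dim(\hull(C))=n$. Hulls of monomially equivalent codes have dimension $n-\rank(\Omega_D)$, which has the same parity as $n$. Hence the condition $\dim(\hull(C'))<n$ is the same as asking for some $C'$ with $\dim(\hull(C'))=n-2r$ for some $r\ge 1$, that is, $\Delta(C)\ge 2$.

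For the forward direction, suppose such a $C'$ exists, with $\dim(\hull(C'))=n-2r$ and $r\ge1$. Theorem \ref{minorhull} then gives a nonzero $r\times r$ minor of $A$ avoiding the diagonal entries, with row index set $\mathcal R$ and column index set $\mathcal S$ satisfying $\mathcal R\cap\mathcal S=\varnothing$. A nonzero determinant forces some entry $a_{ij}$ of that submatrix to be nonzero, with $i\in\mathcal R$ and $j\in\mathcal S$, so $i\neq j$. Thus $A$ has a nonzero off-diagonal entry and is not diagonal.

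For the converse, suppose $A$ is not diagonal, so there is $a_{ij}\neq0$ with $i\ne j$. This entry is a nonzero $1\times 1$ minor not containing a diagonal entry. Theorem \ref{minorhull} with $r=1$ then yields $C'$ monomially equivalent to $C$ with $\dim(\hull(C'))=n-2<n$.

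As a sanity check independent of the cited result, use $\Omega_D=[(d_i-d_j)a_{ij}]$ from \eqref{omegaself}. Choosing $d_i\neq d_j$, which is possible since $q\ge3$ guarantees $|\F_q^*|\ge2$, makes the $(i,j)$ entry nonzero, so $\rank(\Omega_D)\ge2$. Conversely, if $A$ is diagonal then $\Omega_D=0$ for every $D$, so no monomially equivalent code has a smaller hull. There is no real obstacle here. The only point needing care is the parity observation that rules out hull dimension $n-1$, so that "less than $n$" genuinely corresponds to $r\ge1$ in Theorem \ref{minorhull}.
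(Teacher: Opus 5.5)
Your proposal is correct and matches the paper's intended argument. The paper states this corollary without a separate proof, as the $r=1$ case of Theorem~\ref{minorhull} (equivalently, $\Delta(C)\ge 2$ in Corollary~\ref{minormax}), and your parity observation handles the reduction from ``$<n$'' to ``$=n-2r$ with $r\ge 1$''. Your direct check via $\Omega_D=[(d_i-d_j)a_{ij}]$ is a self-contained confirmation of both directions, and it avoids relying on \cite{li2024}, which the converse of Theorem~\ref{minorhull} uses.
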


 Now we consider the case when $A=\diag(a_{11},a_{22},\dots,a_{nn})$ is a diagonal matrix. Let $e_1,\dots,e_n,f_1,\dots,f_n$ be a symplectic basis of $\F_q^{2n}$, in which the matrix $G=[ I | A]$ is defined. Then 
   $$C=\langle e_1+a_{11}f_1\rangle\oplus\cdots\oplus\langle   e_n+a_{nn}f_n\rangle.$$
We choose $n$ matrices in $\GL_2(\mathbb{F}_q)$  transforming $(1, a_{ii})$ to $(1,0)$, for all $1\leq i\leq n$. Then  $C$ is $\GL$-equivalent to the code  with generator matrix $G=[\,I_n \mid 0\,]$. In other words, $C$ is $\GL$-equivalent to the direct sum of orthogonal codes
$$C'=\langle e_1\rangle\oplus\cdots\oplus\langle   e_n\rangle.$$

  \section{Graphs associated to self-dual codes}\label{graph}
In this section, we establish connections between self-dual codes and graphs.  We give some examples of  optimal and best-known codes according to Grassl's table (see \cite{grassl}).

A graph is an ordered pair $\G = (V,E)$, where $V=\{v_1,v_2,\dots,v_n\}$  is a set whose elements are called vertices, and $E$ is a set of unordered pairs $\{v_i,v_j\}$, whose elements are called edges.  A graph with $n$ vertices can be represented by an
$n\times n$ adjacency matrix $A=(a_{ij})\in\M(n,\F_q)$, where \[a_{ij}=
\begin{cases}
1, & \{v_i,v_j\}\in E,\\
0, &\text{ otherwise.}
\end{cases}
\] 
Notice that we consider $a_{ij}$ to be from $\F_q$ and not from the real field as usually considered in graph theory.
A loop-free
graph has a symmetric adjacency matrix where all diagonal
elements are 0. A matching of a graph $\G$ is a set of edges without common vertices. If all the vertices are covered by disjoint edges (no common vertices), then we say $\G$ has a \textit{perfect matching}. A maximum matching is a matching that contains the largest possible number of edges. There may be many maximum matchings. The matching number $\nu(\G)$ of a graph $\G$ is the size of a maximum matching. So it is easy to see that if $\G$ has a perfect matching then $\nu(\G)=n/2$. 

Let $\G = (V, E)$ be a graph. Let $V_1$ and $V_2$ be two disjoint subsets of $V$ such that $V=V_1\cup V_2$. If 
$$E\subseteq\{(v_i,v_j): v_i\in V_1 \text{ and }v_j\in V_2\},$$
then $\G$ is said to be a \textit{bipartite graph}. If $|E|=|V_1||V_2|$, then we call $\G$ a complete bipartite graph and denote it by $\K_{|V_1|,|V_2|}$. It is well known that a graph is bipartite if and only if it has no odd cycle.  If a graph does not contain any cycle, then it is called a \textit{tree}.

Let us recall Tutte's matrix, used to determine the existence of a perfect matching. Let $\G$ be a graph and $V=\{v_1,v_2,\dots,v_n\}$ be the set of vertices. The Tutte matrix $T_{\G}$ is defined in \cite{Tutte} as
\[{T_\G}=\sum_{(v_i,v_j)\in E, i<j} x_e(E_{i,j
}-E_{j,i}),
\]
where $x_e$ is an  indeterminate indexed by $e=(v_i,v_j)\in E$. The coefficient matrix  $E_{i,j
}-E_{j,i}$ is the rank-two alternating matrix whose $(i,j)$-entry is $+1$, $(j,i)$-entry is $-1$ and others are 0, for $i<j$. It is well known that $2\nu(\G)=\rank(T_{\G})$. Therefore, $T_\G$ is non-singular if and only if $\G$ has a perfect matching. 

Let $C$ be a $[2n,n]_q^s$ self-dual code with generator matrix $G=[I \, |\,A]$, where $A$ is a symmetric matrix over $\F_q$. Let us associate a graph $\G_{C}$ to the code $C$ by taking $A'=(a'_{ij})$ as the adjacency matrix of $\G_C$, where
$$a'_{ij}=\begin{cases}
0, & a_{ij}=0,  \\
1, & a_{ij}\neq0.
\end{cases}$$ Note that the mapping $C\rightarrow\G_C$ is non-injective, i.e., many codes might be mapped to the same graph $\G_C$.

 \begin{theorem}\label{delatalessmatching}
     Let $C$ be a self-dual $[2n,n]_q^s$ code with generator matrix $G=[I\,|\,A]$. Then $\Delta(C)\leq 2\nu(\G_C)$.
 \end{theorem}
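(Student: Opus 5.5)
We will prove the bound by comparing $\Omega_D$ with the Tutte matrix of $\G_C$. By Equation \eqref{omegaself}, for $d_1,\dots,d_n\in\F_q^*$ the Gram matrix of $C_D$ is
\[
\Omega_D = DA - AD = \bigl[(d_i-d_j)a_{ij}\bigr]_{i,j}.
\]
Its diagonal entries vanish. The entry $(i,j)$ with $i\neq j$ can be nonzero only if $a_{ij}\neq 0$, that is, only if $\{v_i,v_j\}$ is an edge of $\G_C$. So $\Omega_D$ is an alternating matrix supported on the edge set of $\G_C$. Writing it as
\[
\Omega_D=\sum_{\{v_i,v_j\}\in E,\ i<j} (d_i-d_j)a_{ij}\,(E_{i,j}-E_{j,i}),
\]
we see that $\Omega_D$ is the specialization of the Tutte matrix $T_{\G_C}$ at $x_e=(d_i-d_j)a_{ij}\in\F_q$ for each edge $e=\{v_i,v_j\}$, $i<j$.

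The key step is that specializing indeterminates cannot increase rank. This is the same argument used before Theorem \ref{delta=2s}. Suppose $\rank(\Omega_D)>\rank(T_{\G_C})$. Then some minor of size larger than $\rank(T_{\G_C})$ is nonzero at this specialization. That minor is a polynomial in the $x_e$, so it cannot be identically zero over $\F_q(x_e)$, which is a contradiction. Hence $\rank(\Omega_D)\le \rank(T_{\G_C})$ for every choice of $D$. Over the field of rational functions one has $\rank(T_{\G_C})=2\nu(\G_C)$, a fact recalled in the paper, so $\rank(\Omega_D)\le 2\nu(\G_C)$. Taking the maximum over $D$ in the definition of $\Delta(C)$ gives $\Delta(C)\le 2\nu(\G_C)$.

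The only delicate point is the identity $\rank T_{\G}=2\nu(\G)$ over $\F_q(x_e)$ in arbitrary characteristic. The paper quotes it as well known, but if it needs justification we would argue combinatorially. A nonzero $2m\times 2m$ principal minor of an alternating matrix equals the square of a Pfaffian. The Pfaffian of the principal submatrix of $T_\G$ on a vertex set $W$ is a sum over perfect matchings of $\G[W]$ of distinct monomials, so its coefficients are $\pm1$ and it is nonzero in any characteristic exactly when $\G[W]$ has a perfect matching. The rank of an alternating matrix equals the size of its largest nonsingular principal submatrix. Hence $\rank T_\G$ is $2\nu(\G)$.

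Alternatively, we could avoid Tutte matrices altogether with a direct argument. By Corollary \ref{minormax}, $\Delta(C)=2r$, where some $r\times r$ minor of $A$ on disjoint row set $\mathcal{R}$ and column set $\mathcal{S}$ is nonzero. Expanding that determinant, some term $\prod_{i\in\mathcal{R}} a_{i\,\tau(i)}$ is nonzero for a bijection $\tau:\mathcal{R}\to\mathcal{S}$. Since $\mathcal{R}\cap\mathcal{S}=\varnothing$, the edges $\{v_i,v_{\tau(i)}\}$, $i\in\mathcal{R}$, are pairwise disjoint, so they form a matching with $r$ edges. Hence $r\le\nu(\G_C)$. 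This second route is cleaner and we would present it as the main proof, keeping the Tutte-matrix argument as a remark.
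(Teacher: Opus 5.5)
Your proposal is correct, but the proof you plan to make primary is not the one the paper gives.

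Your Tutte-matrix argument is exactly the paper's proof. You view $\Omega_D$ as the specialization of $T_{\G_C}$ at $x_e=(d_i-d_j)a_{ij}$ and use $\rank T_{\G_C}=2\nu(\G_C)$. You also justify two facts the paper takes for granted, and both justifications are sound:
\begin{itemize}
\item specialization cannot raise rank, which you show via minors;
\item $\rank T_{\G}=2\nu(\G)$ holds in every characteristic. You get this from the Pfaffians of principal submatrices, whose monomials are distinct with coefficients $\pm1$. You combine it with the fact that the rank of an alternating matrix is the size of its largest nonsingular principal submatrix.
\end{itemize}

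The route you intend as the main proof is genuinely different. It goes through Corollary \ref{minormax} and picks a nonzero Leibniz term $\prod_{i\in\mathcal{R}}a_{i\tau(i)}$. Its edges are pairwise disjoint because $\tau$ is injective and $\mathcal{R}\cap\mathcal{S}=\varnothing$, and this disjointness also rules out loops, which $\G_C$ may have. This is in effect a rigorous version of the paper's proof of Theorem \ref{rminormatching}. The paper states that result right after this one, and it already implies the inequality; the paper's own proof of it only says a matching "can be selected" from the minor.

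The trade-off is as follows:
\begin{itemize}
\item Your combinatorial route is elementary and constructive, giving an explicit matching of size $\Delta(C)/2$. However, it rests on the chain Theorem \ref{delta=2s}, Theorem \ref{minorhull}, Corollary \ref{minormax}. It therefore depends on Lov\'asz's generic-rank identity for the matroid-parity space $L_C$, which enters through the bound $\Delta(C)\le 2s$.
\item The Tutte route needs only the graphic special case of that identity, which you proved yourself. It is independent of the minor characterization, so it is the more self-contained of the two.
\end{itemize}
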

 \begin{proof}
  Let $\Omega_D$ be as defined in Equation \eqref{omegaself}. Then
  \[\Omega_D= \sum_{(i,j)\in E, i<j} a_{ij}(d_i-d_j)(E_{i,j}-E_{j,i}).
\]It is easy to  see that $\rank(\Omega_D)\leq\rank(T_\G)$ since $x_e$ are indeterminates, however $(d_i-d_j)a_{ij} \in\F_q$, for all $i,j\in\{1,2,\dots,n\}$ . Therefore, $\Delta(C)\leq \rank(T_\G)= 2\nu(\G_C)$.
\end{proof}

 \begin{example}
We provide an example to demonstrate that the inequality in Theorem~\ref{delatalessmatching} can be strict. Let $C$ be the self-dual $[12,6]_q^s$ code with generator matrix $G=[I_6\mid A]$, where
\[
A=\begin{bmatrix}
0&0&0&1&1&1\\
0&0&0&1&1&1\\
0&0&0&1&1&1\\
1&1&1&0&0&0\\
1&1&1&0&0&0\\
1&1&1&0&0&0
\end{bmatrix}.
\]
Then $\G_C$ is the complete bipartite graph $\K_{3,3}$, which has a perfect matching, i.e., $\nu(\G_C)=3$. Moreover, $\Delta(C)=4$, since
$a_1=(1,0,0,0,0,0)^T$, $b_1=(0,0,0,1,1,1)^T$, $a_6=(0,0,0,0,0,1)^T$, $b_6=(1,1,1,0,0,0)^T$
form a largest set of collectively linearly independent pairs of columns of $G$. Hence $\Delta(C)<2\nu(\G_C)$.
\end{example}

\begin{theorem}\label{rminormatching}
Let $C$ be a self-dual $[2n,n]_q^s$ code with generator matrix $G=[I_n\, |\, A ]$. Then the graph $\G_C$ has a matching of size $\Delta(C)/2$. 
\end{theorem}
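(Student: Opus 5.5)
Set $r=\Delta(C)/2$. By Corollary~\ref{minormax} (equivalently Theorem~\ref{minorhull} applied with $r=\Delta(C)/2$), $A$ has a nonzero $r\times r$ minor that contains no diagonal entries. Let $M=A_{\mathcal{R};\mathcal{S}}$ be the corresponding submatrix, with row set $\mathcal{R}=\{i_1,\dots,i_r\}$ and column set $\mathcal{S}=\{j_1,\dots,j_r\}$. Since no diagonal entry appears, $\mathcal{R}\cap\mathcal{S}=\varnothing$.

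Expand $\det M$ by the Leibniz formula:
\[
\det M=\sum_{\tau}\pm\, a_{i_1 j_{\tau(1)}}\cdots a_{i_r j_{\tau(r)}},
\]
where $\tau$ runs over bijections $\{1,\dots,r\}\to\{1,\dots,r\}$. Because $\det M\neq 0$, at least one term is nonzero. So there is a bijection $\tau$ with $a_{i_t j_{\tau(t)}}\neq 0$ for every $t$. By the definition of $\G_C$, each $\{v_{i_t},v_{j_{\tau(t)}}\}$ is then an edge of $\G_C$. It is not a loop, since $i_t\in\mathcal{R}$, $j_{\tau(t)}\in\mathcal{S}$, and the two sets are disjoint.

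These $r$ edges form a matching. Their endpoints are $i_1,\dots,i_r$ on one side and $j_{\tau(1)},\dots,j_{\tau(r)}$ on the other. The $i_t$ are distinct, and the $j_{\tau(t)}$ are distinct because $\tau$ is a bijection. Disjointness of $\mathcal{R}$ and $\mathcal{S}$ rules out any coincidence between an $i$ and a $j$. Hence all $2r$ endpoints are distinct, and $\G_C$ has a matching of size $r=\Delta(C)/2$.

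The only step needing care is the appeal to Theorem~\ref{minorhull}. It has to supply a minor of size exactly $\Delta(C)/2$, not just a minor whose size equals some attainable hull dimension. This is guaranteed because Corollary~\ref{HVPlinear} shows that hull dimension $n-\Delta(C)$ is attained. Combined with Theorem~\ref{delatalessmatching}, this gives $\Delta(C)/2\le\nu(\G_C)$, consistent with that result.
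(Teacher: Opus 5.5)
Your proof is correct and follows the paper's route: Theorem~\ref{minorhull} with $r=\Delta(C)/2$ (attainable by Corollary~\ref{HVPlinear}) gives a nonzero $r\times r$ minor of $A$ avoiding the diagonal, and a matching of size $r$ is read off from it. Your Leibniz-expansion argument, together with the disjointness of the row and column index sets, makes explicit the step the paper compresses into ``using this minor we can select a matching.''
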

\begin{proof}
   By Theorem \ref{minorhull}, there is a non-zero  $(\Delta(C)/2\times \Delta(C)/2)$-minor, associated to a submatrix $B$, not containing diagonal entries of $A$. Since $\rank(B)=\Delta(C)/2$, using this minor we can select a matching of size  $\Delta(C)/2$. 
  \end{proof}

Conversely, we can construct self-dual codes from graphs.    
Let $\G$ be a graph on $n$ vertices with adjacency matrix $A$. 
We denote by $C_{\G}$ the self-dual $[2n,n]_q^s$ code with generator matrix $[I\mid A]$. 
The graph associated with $C_{\G}$ is $\G$ itself, so Theorems~\ref{delatalessmatching} and~\ref{rminormatching} apply to $C_{\G}$ with $\G_{C}$ replaced by $\G$.
 
  \begin{theorem}
 Let $\G$ be a graph with adjacency matrix $A$. If $\{v_{i_1},v_{j_1}\},\dots, \{v_{i_r},v_{j_r}\}$ is a maximum matching of $\G$
and the minor $M_{\{i_1,i_2\dots,i_{r}\};\{j_1,j_2,\dots,j_{r}\}}$ of $A$ is non-zero, then $\Delta(C_{\G})=2r=2\nu(\G)$. 
\end{theorem}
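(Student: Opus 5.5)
The argument is a sandwich: an upper bound $\Delta(C_\G)\le 2\nu(\G)$ from Theorem~\ref{delatalessmatching}, and a matching lower bound $\Delta(C_\G)\ge 2r$ from Theorem~\ref{minorhull} applied to the given minor.

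\emph{Upper bound.} The code $C_\G$ has generator matrix $[I\mid A]$, where $A$ is the adjacency matrix of $\G$. Its associated graph is $\G$ itself, as noted just before the statement. Theorem~\ref{delatalessmatching} therefore gives $\Delta(C_\G)\le 2\nu(\G)$. Since the matching $\{v_{i_1},v_{j_1}\},\dots,\{v_{i_r},v_{j_r}\}$ is maximum, $\nu(\G)=r$, so $\Delta(C_\G)\le 2r$.

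\emph{Lower bound.} Let $B$ be the submatrix of $A$ with rows $\mathcal{R}=\{i_1,\dots,i_r\}$ and columns $\mathcal{S}=\{j_1,\dots,j_r\}$.
\begin{itemize}
\item Because the edges of a matching share no vertices, the $2r$ indices $i_1,\dots,i_r,j_1,\dots,j_r$ are pairwise distinct. In particular $\mathcal{R}\cap\mathcal{S}=\varnothing$.
\item Hence no entry $a_{i_t j_u}$ of $B$ lies on the diagonal of $A$.
\item By hypothesis $\det B\neq 0$, so $B$ gives a non-zero $r\times r$ minor of $A$ containing no diagonal entries.
\item The converse direction of Theorem~\ref{minorhull} then provides a code $C'$, monomially equivalent to $C_\G$, with $\dim(\hull(C'))=n-2r$.
\item By Corollary~\ref{HVPlinear}, every code monomially equivalent to $C_\G$ has hull dimension at least $n-\Delta(C_\G)$. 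Thus $n-2r\ge n-\Delta(C_\G)$, i.e. $\Delta(C_\G)\ge 2r$.
\end{itemize}
Alternatively, Corollary~\ref{minormax} gives the same bound, since the largest size of a non-zero off-diagonal minor is at least $r$.

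Combining the two bounds gives $\Delta(C_\G)=2r=2\nu(\G)$. The only point needing care is the disjointness $\mathcal{R}\cap\mathcal{S}=\varnothing$, which is exactly what allows Theorem~\ref{minorhull} to apply. This disjointness comes directly from the matching property and not from any assumption on the minor itself. Beyond that, the argument just combines the earlier results, so I expect no real obstacle.
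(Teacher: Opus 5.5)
Your proof is correct and follows the paper's own argument. The paper gets the lower bound $\Delta(C_{\G})\ge 2r$ from the converse direction of Theorem~\ref{minorhull} and the upper bound $\Delta(C_{\G})\le 2\nu(\G)=2r$ from Theorem~\ref{delatalessmatching}, exactly as you do. You also make explicit two steps the paper leaves implicit: the matching property forces $\{i_1,\dots,i_r\}\cap\{j_1,\dots,j_r\}=\varnothing$, so the minor avoids the diagonal, and a hull of dimension $n-2r$ gives $\Delta(C_{\G})\ge 2r$ via Corollary~\ref{HVPlinear}.
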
 
\begin{proof}
   Since there exists a non-zero minor $M_{\{i_1,i_2\dots,i_{r}\};\{j_1,j_2,\dots,j_{r}\}}$ of $A$,  by Theorem \ref{minorhull}, there exists a code $C'$ monomially equivalent to $C_{\G}$ with $\dim(\hull(C'))=n-2r$.
   By Theorem \ref{delatalessmatching}, $2r\leq \Delta(C_{\G})\leq2\nu(\G)=2r$, we have $\Delta(C_{\G})=2r$.
\end{proof}
Now we consider some special classes of graphs.
\begin{theorem}\label{bipartite}
      Let $\G$ be a bipartite graph with $n$ vertices and adjacency matrix $A$.  If $\G$ has a unique perfect matching then $\Delta(C_{\G})=n$.
      \end{theorem}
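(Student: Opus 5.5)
The plan is to apply Corollary~\ref{minormax} (equivalently Theorem~\ref{minorhull}). We need a nonzero $r\times r$ minor of $A$ with $r=n/2$ that avoids diagonal entries; together with the upper bound $\Delta(C_\G)\le 2\nu(\G)=n$ from Theorem~\ref{delatalessmatching}, this gives $\Delta(C_\G)=n$.

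Let $V=V_1\cup V_2$ be the bipartition. Since $\G$ has a perfect matching, $|V_1|=|V_2|=n/2=:m$. After relabeling vertices (a permutation, which is harmless by the argument in the proof of Theorem~\ref{minorhull}), put $V_1$ first. Then
\[A=\begin{bmatrix}0 & B\\ B^T & 0\end{bmatrix},\]
where $B\in\F_q^{m\times m}$ is the biadjacency matrix with entries in $\{0,1\}$. The submatrix of $A$ with rows indexed by $V_1$ and columns indexed by $V_2$ is exactly $B$. Because the row and column index sets are disjoint, this minor contains no diagonal entries of $A$. It remains to show $\det B\neq 0$ in $\F_q$.

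Expand
\[\det B=\sum_{\sigma\in S_m}\operatorname{sgn}(\sigma)\prod_{i=1}^m b_{i\sigma(i)}.\]
A term is nonzero exactly when every $b_{i\sigma(i)}=1$, that is, when $\{\{u_i,w_{\sigma(i)}\}\}$ is a perfect matching of $\G$. Perfect matchings of a bipartite graph with sides $V_1,V_2$ correspond bijectively to such permutations $\sigma$. By uniqueness of the perfect matching, exactly one term survives, and it equals $\pm1$. Hence $\det B=\pm1\neq0$ over every field $\F_q$, including positive characteristic.

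By Theorem~\ref{minorhull} there is therefore an equivalent code with hull dimension $n-2m=0$, so $\Delta(C_\G)\ge n$. The reverse inequality holds since $\Delta(C_\G)$ is at most the rank of an $n\times n$ matrix (or by Theorem~\ref{delatalessmatching}). Thus $\Delta(C_\G)=n$.

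The main point needing care is the determinant step. Uniqueness of the perfect matching is exactly what prevents cancellation mod $p$, and it needs the bijection between permutations with all $b_{i\sigma(i)}=1$ and perfect matchings, which uses bipartiteness: every edge crosses between $V_1$ and $V_2$. The remaining steps are routine.
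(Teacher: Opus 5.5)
Your proof is correct and shares the paper's skeleton. Both order the vertices by the bipartition so that $A=\begin{bmatrix}0&B\\ B^T&0\end{bmatrix}$, and both show that the off-diagonal block $B$ is invertible. The differences are in the two ends of the argument.

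\textbf{Invertibility of $B$.} The paper simply cites Godsil's lemma: for a bipartite graph with a unique perfect matching, $\det A=\pm1$ and $n$ is even. You derive $\det B=\pm1$ from the Leibniz expansion. This is self-contained and makes it transparent why no cancellation modulo $p$ can occur.

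\textbf{The conclusion.} The paper finishes with Theorem~\ref{delta=2s}. In $[I\,|\,A']$ the vectors $e_1,\dots,e_{n/2},a'_1,\dots,a'_{n/2}$ are exactly the columns of $\begin{bmatrix}I&0\\ 0&B^T\end{bmatrix}$. So the pairs $(e_j,a'_j)$ are $n/2$ collectively independent pairs, and $\Delta(C_\G)=n$ follows at once. You instead go through the minor criterion (Theorem~\ref{minorhull} and Corollary~\ref{minormax}). Its ``minor $\Rightarrow$ equivalent code'' direction relies on the external theorem of Li and Zhu. That is legitimate here, but the paper's finish is shorter and uses only results proved in the paper.

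A small remark: your relabeling step is unnecessary. Theorem~\ref{minorhull} allows arbitrary disjoint row and column index sets, so the submatrix of $A$ on rows $V_1$ and columns $V_2$ can be used as it stands.
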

      \begin{proof}
      Since $\G$ is a bipartite graph and  has a unique perfect matching,  by \cite[Lemma 2.1]{godsil} we have that $\det(A)=\pm1$ and $n$ is even. Hence, we can rearrange the vertices to get  \[A'=PAP^T=\begin{bmatrix}
              0&B\\B^T &0
          \end{bmatrix},\] where $P$ is a permutation matrix and $B$ is an invertible $\frac{n}{2}\times \frac{n}{2}$ square  matrix.
        Then $G'=[I\,|\,A']$ is a generator matrix of a code $C'$ equivalent to $C_{\G}$. Since $\det(B)\neq0$, the $\frac{n}{2}$ columns of $B^T$ are linearly independent. Denote the first $n/2$ columns of $A'$ as $a'_1,a'_2,\dots,a'_{\frac{n}{2}}$.  Select the columns  \[e_1,a'_1, \,e_2,a'_2,\dots, e_{\frac{n}{2}}, a'_{\frac{n}{2}},\]
        from the matrix $G'$.
        They are clearly $\frac{n}{2}$ collectively linearly independent pairs. Hence $\Delta(C_{\G})=n$.
      \end{proof}

\begin{theorem}
 Let $\G$ be a tree with $n$ vertices and adjacency matrix $A$. Then the following statements are equivalent. 
 \begin{enumerate}
    
     \item The determinant of $A$  is non-zero.
     \item The tree $\G$ has a perfect matching.
     \item The tree $\G$ has a unique perfect matching.
     \item   The number $n$ is even and $ \Delta(C_{\G})=n$.
     \item The code $C_{\G}$ is monomially equivalent to an LCD code.
 \end{enumerate}
\end{theorem}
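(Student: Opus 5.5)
We would prove the cycle of implications $(1)\Rightarrow(2)\Rightarrow(3)\Rightarrow(4)\Rightarrow(5)\Rightarrow(1)$, or a close variant, using the determinant expansion of the adjacency matrix of a tree together with Theorem~\ref{bipartite} and Corollary~\ref{LCD}.

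For $(1)\Rightarrow(2)$, expand $\det(A)=\sum_{\sigma\in S_n}\mathrm{sgn}(\sigma)\prod_i a_{i\sigma(i)}$. A nonzero term corresponds to a spanning subgraph of $\G$ made of disjoint cycles of $\sigma$. Since $A$ has zero diagonal, there are no fixed points. Since $\G$ is a tree, it has no cycles of length $\geq 3$, so every cycle of $\sigma$ is a transposition along an edge. Thus each nonzero term comes from a perfect matching, and $\det(A)\neq 0$ forces a perfect matching to exist. This argument is characteristic-free, because $A$ has entries in $\{0,1\}\subseteq\F_q$.

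For $(2)\Rightarrow(3)$, we use the standard fact that a forest has at most one perfect matching. The proof is by induction: a leaf must be matched to its unique neighbour, and deleting both vertices leaves a forest to which the inductive hypothesis applies. The same argument gives the converse $(3)\Rightarrow(1)$: the determinant expansion then has exactly one nonzero term, equal to $(-1)^{n/2}$, so $\det(A)=\pm1\neq 0$ in every characteristic. This establishes $(1)\Leftrightarrow(2)\Leftrightarrow(3)$.

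For $(3)\Rightarrow(4)$, a tree is bipartite, so Theorem~\ref{bipartite} applies directly and gives $n$ even and $\Delta(C_\G)=n$. The step $(4)\Rightarrow(5)$ is Corollary~\ref{LCD} with $k=n$.

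For $(5)\Rightarrow(2)$, Corollary~\ref{LCD} gives $\Delta(C_\G)=n$. Theorem~\ref{delatalessmatching} then yields $n\le 2\nu(\G)\le n$, so $\nu(\G)=n/2$ and $\G$ has a perfect matching. Alternatively, Theorem~\ref{rminormatching} gives a matching of size $n/2$ directly. This closes the cycle.

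The only real content lies in the combinatorial facts about trees: that the permutation expansion reduces to perfect matchings, and that such a matching is unique. The main care needed is to make sure the argument works over $\F_q$ rather than over $\mathbb{Z}$. This holds because the unique surviving term is $\pm1$, which is nonzero in any characteristic. That observation is the step I would verify most carefully.
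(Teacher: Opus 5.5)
Your proof is correct and follows the paper's route for the coding-theoretic implications: Theorem~\ref{bipartite} for $(3)\Rightarrow(4)$, Corollary~\ref{LCD} for $(4)\Leftrightarrow(5)$, and a matching-number argument to return to $(2)$ (the paper uses Theorem~\ref{rminormatching}, and Theorem~\ref{delatalessmatching} works equally well). The only difference is that the paper cites Godsil~\cite{godsil} for the equivalence of $(1)$--$(3)$ and for $\det(A)=\pm1$. You instead prove these tree facts directly via the permutation expansion and leaf induction, which also makes explicit why nonsingularity holds over $\F_q$ in every characteristic.
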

\begin{proof}
   It is known that the adjacency matrix of a tree is non-singular
if and only if the tree has a unique perfect matching. Since trees are bipartite graphs we have $\det(A)=\pm{1}$ (see \cite{godsil}). Therefore, the first three statements are equivalent. The last two statements are equivalent, since $\Delta(C_{\G})=n$ if and only if there exists a code $C'$ monomially equivalent to $C_{\G}$ such that $\dim(\hull(C'))=0$.
    
Assume that the tree $\G$ has a unique perfect matching; then by Theorem \ref{bipartite}, we have $\Delta(C_{\G})=n$. This proves that statement 3 implies statement 4. By Theorem \ref{rminormatching},  if  $\Delta(C_{\G})=n$ then there exists an $(n/2)$-matching, which is a perfect matching. This proves that statement 4 implies statement 2.
\end{proof}
We give some examples of graphs that yield MDS, optimal, and best-known self-dual codes.
In each of the following examples, $\Delta(C_{\G})=2\nu(\G)$, that is, the bound of Theorem~\ref{delatalessmatching} is attained.

\begin{example}
Let  $\G$ be the wheel on six vertices with adjacency matrix 
    \[
A=
\begin{pmatrix}
0 & 1 & 1 & 1 & 1 & 1 \\
1 & 0 & 1 & 0 & 0 & 1 \\
1 & 1 & 0 & 1 & 0 & 0 \\
1 & 0 & 1 & 0 & 1 & 0 \\
1 & 0 & 0 & 1 & 0 & 1 \\
1 & 1 & 0 & 0 & 1 & 0
\end{pmatrix}.
\] Then $C_{\G}$ is a $[12,6]_q^s$ code. Calculation by Magma shows that for $q\leq 1024$, the minimum symplectic distance of the code $C_{\G}$ is 4. Therefore, $C_{\G}$ is a symplectic MDS code for $q\leq1024$. Also, $\G$ has a perfect matching. Notice that 
\[e_1=(1,0,0,0,0,0),\,\, b_1=(0,1,1,1,1,1)\]
\[e_2=(0,1,0,0,0,0),\,\, b_2=(1,0,1,0,0,1)\]
\[e_3=(0,0,1,0,0,0),\,\, b_3=(1,1,0,1,0,0)\]
are three linearly independent pairs that are collectively independent; hence $\Delta(C_{\G})=6=2\nu(\G)$.
\end{example}
\begin{example}
    Let $\G$ be the Petersen graph with adjacency matrix
    \[
A=
\begin{pmatrix}
0&1&0&0&1&1&0&0&0&0\\
1&0&1&0&0&0&1&0&0&0\\
0&1&0&1&0&0&0&1&0&0\\
0&0&1&0&1&0&0&0&1&0\\
1&0&0&1&0&0&0&0&0&1\\
1&0&0&0&0&0&0&1&1&0\\
0&1&0&0&0&0&0&0&1&1\\
0&0&1&0&0&1&0&0&0&1\\
0&0&0&1&0&1&1&0&0&0\\
0&0&0&0&1&0&1&1&0&0
\end{pmatrix}
\] Then $C_{\G}$ is a $[20, 10]_q^s$ code. Using Magma, we calculate that the minimum symplectic distance of the code $C_{\G}$ is 4 for $q\leq1024$. It is well known that the Petersen graph $\G$ has a perfect matching, i.e., $\nu(\G)=5$. Notice that
\[e_1=(1,0,0,0,0,0,0,0,0,0),\,\, b_1=(0,1,0,0,1,1,0,0,0,0)\]
\[e_2=(0,1,0,0,0,0,0,0,0,0),\,\, b_2=(1,0,1,0,0,0,1,0,0,0)\]
\[e_3=(0,0,1,0,0,0,0,0,0,0),\,\, b_3=(0,1,0,1,0,0,0,1,0,0)\]
\[e_4=(0,0,0,1,0,0,0,0,0,0),\,\, b_4=(0,0,1,0,1,0,0,0,1,0)\]
\[e_5=(0,0,0,0,1,0,0,0,0,0),\,\, b_5=(1,0,0,1,0,0,0,0,0,1)\]
are 5 linearly independent pairs that are collectively independent; therefore, $\Delta(C_{\G})=10=2\nu(\G)$.
\end{example}

\begin{example}
   Let us consider a graph $\G$ with an adjacency matrix $A$ given by\[
a_{ij}=
\begin{cases}
1, & \text{if } j-i\equiv \pm1,\pm3,6 \pmod{12},\\
0, & \text{otherwise}.
\end{cases}
\]
The explicit representation of the matrix $A$ is \[
A=
\begin{pmatrix}
0&1&0&1&0&0&1&0&0&1&0&1\\
1&0&1&0&1&0&0&1&0&0&1&0\\
0&1&0&1&0&1&0&0&1&0&0&1\\
1&0&1&0&1&0&1&0&0&1&0&0\\
0&1&0&1&0&1&0&1&0&0&1&0\\
0&0&1&0&1&0&1&0&1&0&0&1\\
1&0&0&1&0&1&0&1&0&1&0&0\\
0&1&0&0&1&0&1&0&1&0&1&0\\
0&0&1&0&0&1&0&1&0&1&0&1\\
1&0&0&1&0&0&1&0&1&0&1&0\\
0&1&0&0&1&0&0&1&0&1&0&1\\
1&0&1&0&0&1&0&0&1&0&1&0
\end{pmatrix}.
\]

Then $C_{\G}$ is a $[24,12]_q^s$ code. Using Magma, we calculate that  the minimum symplectic distance of the code $C_{\G}$ is 6, for $q\leq 1024$. This type of code can be found in \cite[p. 51]{glynn} for $\F_4$, and we have generalized it to any $q$. For  $q=3$ and $4$, these are optimal codes according to \cite{grassl}. Consider
\[e_1=(1,0,0,0,0,0,0,0,0,0,0,0),\,\, a_1=(0,1,0,1,0,0,1,0,0,1,0,1)\]
\[e_2=(0,1,0,0,0,0,0,0,0,0,0,0),\,\, a_2=(1,0,1,0,1,0,0,1,0,0,1,0)\]
\[e_3=(0,0,1,0,0,0,0,0,0,0,0,0),\,\, a_3=(0,1,0,1,0,1,0,0,1,0,0,1)\]
\[e_4=(0,0,0,1,0,0,0,0,0,0,0,0),\,\, a_4=(1,0,1,0,1,0,1,0,0,1,0,0)\]
\[e_5=(0,0,0,0,1,0,0,0,0,0,0,0),\,\, a_5=(0,1,0,1,0,1,0,1,0,0,1,0)\]
\[e_7=(0,0,0,0,0,0,1,0,0,0,0,0),\,\, a_7=(1,0,0,1,0,1,0,1,0,1,0,0).\]
These are 6 linearly independent pairs that are collectively independent; therefore, $\Delta(C_{\G}) =
12=2\nu(\G)$.
\end{example}
\begin{example}
     Let $\G$ be a graph with adjacency matrix $A$ given by
     \[
a_{ij}=
\begin{cases}
1, & \text{if } j-i\equiv \pm1,\pm3,\pm6 \pmod{15},\\
0, & \text{otherwise}.
\end{cases}
\]
Explicitly,
\[
A=
\begin{pmatrix}
0&1&0&1&0&0&1&0&0&1&0&0&1&0&1\\
1&0&1&0&1&0&0&1&0&0&1&0&0&1&0\\
0&1&0&1&0&1&0&0&1&0&0&1&0&0&1\\
1&0&1&0&1&0&1&0&0&1&0&0&1&0&0\\
0&1&0&1&0&1&0&1&0&0&1&0&0&1&0\\
0&0&1&0&1&0&1&0&1&0&0&1&0&0&1\\
1&0&0&1&0&1&0&1&0&1&0&0&1&0&0\\
0&1&0&0&1&0&1&0&1&0&1&0&0&1&0\\
0&0&1&0&0&1&0&1&0&1&0&1&0&0&1\\
1&0&0&1&0&0&1&0&1&0&1&0&1&0&0\\
0&1&0&0&1&0&0&1&0&1&0&1&0&1&0\\
0&0&1&0&0&1&0&0&1&0&1&0&1&0&1\\
1&0&0&1&0&0&1&0&0&1&0&1&0&1&0\\
0&1&0&0&1&0&0&1&0&0&1&0&1&0&1\\
1&0&1&0&0&1&0&0&1&0&0&1&0&1&0
\end{pmatrix}.
\]
Then $C_{\G}$ is a $[30,15]_q^s$ code. Using Magma, we calculate that the minimum symplectic distance of code $C_{\G}$ is 6, for $q$ odd, and 5 for $q$ even, for any $q\leq1024$. For $q=3$, this is the best-known code as in \cite{grassl}. Now we see that 
\[e_1=(1,0,0,0,0,0,0,0,0,0,0,0,0,0,0),\,\,
a_1=(0,1,0,1,0,0,1,0,0,1,0,0,1,0,1)\]
\[e_2=(0,1,0,0,0,0,0,0,0,0,0,0,0,0,0),\,\,a_2=(1,0,1,0,1,0,0,1,0,0,1,0,0,1,0)\]
\[e_3=(0,0,1,0,0,0,0,0,0,0,0,0,0,0,0),\,\,a_3=(0,1,0,1,0,1,0,0,1,0,0,1,0,0,1)\]
\[e_4=(0,0,0,1,0,0,0,0,0,0,0,0,0,0,0),\,\,a_4=(1,0,1,0,1,0,1,0,0,1,0,0,1,0,0)\]
\[e_5=(0,0,0,0,1,0,0,0,0,0,0,0,0,0,0),\,\,a_5=(0,1,0,1,0,1,0,1,0,0,1,0,0,1,0)\]
\[e_7=(0,0,0,0,0,0,1,0,0,0,0,0,0,0,0),\,\,a_7=(1,0,0,1,0,1,0,1,0,1,0,0,1,0,0)\]
\[e_9=(0,0,0,0,0,0,0,0,1,0,0,0,0,0,0),\,\,
a_9=(0,0,1,0,0,1,0,1,0,1,0,1,0,0,1)\]
 are 7 pairs of linearly independent vectors that are collectively independent. Then $\Delta(C_{\G})=14 =2\nu(\G)$. Note that the hull dimension of any code equivalent to $C_{\G}$ cannot be 0, i.e., $C_{\G}$ is not equivalent to an LCD code, but we can find a monomially equivalent code $C'$ such that $\dim(\hull(C'))=1$. 
\end{example}

\section*{Acknowledgment}

This work was supported by UAEU (Grant G00005533).


\begin{thebibliography}{99} 

\bibitem{AH26}
Abdukhalikov, K., \& Ho, D. (2026). On the equivalence between additive and linear codes. IEEE Transactions on Information Theory. 
10.1109/TIT.2026.3726229.


\bibitem{AshikhminKnill01}
Ashikhmin, A., \& Knill, E. (2001). Nonbinary quantum stabilizer codes. IEEE Transactions on Information Theory, 47(7), 3065--3072.



\bibitem{key}
Assmus, E.F. Jr., \& Key, J. D. (1990).  Affine and projective planes. Discrete Mathematics, 83,  161–187. 

\bibitem{ball2021}
Ball, S. (2021). Some constructions of quantum MDS codes. Designs, Codes and Cryptography, 89(5), 811-821.



\bibitem{ball2025}
Ball, S., Centelles, A., \& Huber, F. (2023). Quantum error-correcting codes and their geometries. Ann. Inst. Henri Poincaré Comb. Phys. Interact. 10(2), 337–405.

\bibitem{BallMorenoSimoens25}
Ball, S., Moreno, E., \& Simoens, R. (2025). Stabiliser codes over fields of even order. IEEE Transactions on Information Theory, 71, 3707--3718.

\bibitem{BallPopatia26}
Ball, S., \& Popatia, T. (2026). Additive codes from linear codes. IEEE Transactions on Information Theory, 72, 342--345.



\bibitem{CRSS97}
Calderbank, A. R., Rains, E. M., Shor, P. W., \& Sloane, N. J. A. (1997). Quantum error correction and orthogonal geometry. Physical Review Letters, 78(3), 405--408.



\bibitem{Calderbank}
Calderbank, A. R.,  Rains,  E. M.,  Shor, P. W., \& Sloane, N. J. A. (1998). Quantum error correction via codes over GF(4). IEEE Trans. Inf. Theor. 44(4), 1369–1387. 
 

\bibitem{Carlet}
Carlet, C.  \& Guilley, S. (2016). Complementary dual codes for counter-measures to side-channel attacks. Advances in Mathematics of Communications, 10(1), 131--150.

\bibitem{Mesnager}
Carlet, C., Mesnager,  S., Tang, C., Qi, Y., \& Pellikaan, R. (2018). Linear codes over $\mathbb F_q$ are equivalent to LCD codes for $q>3$. IEEE Transactions on Information Theory, 64(4), 3010-3017. 


\bibitem{cheung}
Cheung, H.Y.,  Lau, L.C., \& Leung, K.M. (2014). Algebraic algorithms for linear matroid parity problems. ACM Transactions on Algorithms, 10(3), 1–26.

\bibitem{Chen}
Chen, H. (2023). On the hull-variation problem of equivalent linear codes. IEEE Transactions on Information Theory, 69(5), 2911-2922.

\bibitem{Danielsen2006}
Danielsen L. E., Parker M. G. (2006). 
On the classification of all self-dual additive codes over $GF(4)$ of length up to 12.  
Journal of Combinatorial Theory, Series A, 113, 1351--1367. 

\bibitem{Danielsen2009}
Danielsen, L.E. (2009). 
Graph-based classification of self-dual additive codes over finite fields. 
Advances in Mathematics of Communications, 3(4), 329-348. 

\bibitem{Danielsen2010}
Danielsen, L. E. (2012). 
On the classification of Hermitian self-dual additive codes over GF(9). 
IEEE Transactions on Information Theory, 58(8), 5500--5511.

\bibitem{Ezerman21}
Ezerman, M. F. (2021). Quantum error-control codes. In W. C. Huffman, J.-L. Kim, \& P. Sol\'e (Eds.), Concise Encyclopedia of Coding Theory (Chapter 27). Chapman and Hall/CRC.





\bibitem{galindo2019}
Galindo, C., Hernando, F., Matsumoto, R., \& Ruano, D. (2019). Entanglement-assisted quantum error-correcting codes over arbitrary finite fields. Quantum Information Processing, 18(4).

\bibitem{GLPllaha19}
Gluesing-Luerssen, H., \& Pllaha, T. (2019). On quantum stabilizer codes derived from local Frobenius rings. Finite Fields and Their Applications, 58, 145--173.

\bibitem{glynn}
 Glynn, D.G., Gulliver,  T.A., Maks, J.G.,  \&  Gupta, M.K.  (2014). The Geometry of
Additive Quantum Codes. unpublished manuscript  available online at
https://www.academia.edu/17980449/.


\bibitem{godsil}
Godsil, C.  (1985). Inverses of trees. Combinatorica, 5, 33–39.


\bibitem{grassl}
Grassl, M. Bounds on the minimum distance of linear codes and quantum codes. \url{https://www.codetables.de}



\bibitem{Ho}
Ho, D., \& Johnsen, T. (2026). On the hull-variation problem of equivalent vector rank metric codes. Advances in Mathematics of Communications, 22, 163--174.

\bibitem{ho2026}
Ho, D. (2026). Hermitian hull-variation of vector rank-metric codes and self-orthogonal generalized Gabidulin codes. arXiv preprint arXiv:2605.20109.



\bibitem{KKKS06}
Ketkar, A., Klappenecker, A., Kumar, S., \& Sarvepalli, P. K. (2006). Nonbinary stabilizer codes over finite fields. IEEE Transactions on Information Theory, 52(11), 4892--4914.


\bibitem{li2024}
Li, Y., \& Zhu, S. (2024). On symplectic hulls of linear codes and related applications. Journal of Applied Mathematics and Computing, 70(3), 2603-2622.


\bibitem{Lovasz79} 
Lovász, L. (1979). On determinants, matchings and random algorithms. Fundamentals of Computation
Theory, 79, 565–574.


\bibitem{Lovasz80}
Lovász, L.  (1980). Matroid matching and some applications.
Journal of Combinatorial Theory, Series B, 28, 208--236.

\bibitem{Lovasz89}
Lovász, L. (1989). Singular spaces of matrices and their application in combinatorics. Bulletin of the Brazilian Mathematical Society, 20(1), 87–99.



\bibitem{Ling}
Luo, G., Ezerman, M.F., Grassl, M., \& Ling, S. (2024). Constructing quantum error-correcting codes that require a variable amount of entanglement.  Quantum Information Processing, 23(4). 


\bibitem{Massey}
Massey, J.L. (1992). Linear codes with complementary duals. 
Discrete Mathematics, 106, 337--342.

\bibitem{nielsenbook}
Nielsen, M. A., \& Chuang, I. L. (2010). Quantum Computation and Quantum Information, 10th Anniversary Edition. Cambridge University Press. 




\bibitem{Pllaha}
Pllaha, T. (2020). Symplectic isometries of stabilizer codes. 
Journal of Algebra and Its Applications, Vol. 19(2), 2050021. 



\bibitem{Rains99}
Rains, E. M. (1999). Nonbinary quantum codes. IEEE Transactions on Information Theory, 45(6), 1827--1832.


\bibitem{Sendrier97}
Sendrier, N. (1997). On the dimension of the hull. SIAM Journal on Discrete Mathematics, 10(2), 282--293.  

\bibitem{Sendrier2000}
Sendrier, N. (2000). Finding the permutation between equivalent linear codes: The support splitting algorithm. IEEE Transactions on Information Theory, 46(4), 1193--1203.

\bibitem{Tonchev} 
Tonchev, V. D. (2002). Error-correcting codes from graphs, Discrete Math. 257 (2--3), 549--557.

\bibitem{Tutte}
Tutte, W. T. (1947). The factorization of linear graphs. Journal of the London Mathematical
Society. 22(2):107–111. 

\bibitem{XuDu21}
Xu, H. Q., \& Du, W. (2021). Constructions of symplectic LCD MDS codes. Bulletin of the Malaysian Mathematical Sciences Society, 44, 3377--3390.

\bibitem{XuDu22}
Xu, H. Q., \& Du, W. (2022). On some binary symplectic self-orthogonal codes. Applicable Algebra in Engineering, Communication and Computing, 33, 321--337.


\bibitem{li2024b}
Zhu, S., Li, Y., \& Li, S. (2024). Symplectic self-orthogonal and linear complementary dual codes from the Plotkin sum construction. Finite Fields and Their Applications, 96, 102425.
\end{thebibliography}
\end{document}